\newtheorem*{lemma*}{Key Lemma}
\newtheorem{theorem}{Theorem}
\newtheorem*{corollary*}{Corollary}
\newtheorem*{conjecture*}{Conjecture}
\newtheorem*{definition*}{Definition}
\newtheorem*{notation*}{Notation}
\newtheorem*{question*}{Question}
\newtheorem*{result*}{Main results}
\newtheorem{fact}{Fact}
\newtheorem{proposition}{Proposition}[subsection]
\theoremstyle{definition}
\newtheorem*{remarks*}{Remarks}
\newcommand{\GA}{\operatorname{GA}}
\newcommand{\PGL}{\operatorname{PGL}}
\newcommand{\SO}{\operatorname{SO}}
\newcommand{\SL}{\operatorname{SL}}
\newcommand{\bF}{\mathbb{F}}
\newcommand{\bJ}{\mathbb{J}}
\newcommand{\bK}{\mathbb{K}}
\newcommand{\bR}{\mathbb{R}}
\title{On quasi-Frobenius pairs of finite Morley rank}
\author{Tuna Alt{\i}nel, Luis Jaime Corredor and Adrien Deloro}
\begin{document}

\maketitle

\abstract{We clarify quasi-Frobenius configurations of finite Morley rank.
\begin{enumerate*}
\item
We remove one assumption in an identification theorem by Zamour while simplifying the proof.
\item
We show that a strongly embedded quasi-Frobenius configuration of odd type, is actually Frobenius.
\item
For dihedral configurations, one has $\dim G = 3 \dim C$.
\end{enumerate*}
% Both
These results
rely on an interesting phenomenon of closure of non-generic matter under taking centralisers.
}\bigskip

% {\centering\small
% \S~\ref{S:introduction}.~Introduction
% | % \quad
% \S~\ref{S:lemma}.~The Key Lemma
% | % \quad
% \S~\ref{S:CoBo}.~Proof of Thm~\ref{t:CoBo}
% | % \quad
% \S~\ref{S:1or2}.~Proof of Thm~\ref{t:1or2}
% \par}
\normalsize

{\centering%\small
\S~\ref{S:introduction}.~Introduction
\quad --- \quad
\S~\ref{S:lemma}.~The Key Lemma
\quad --- \quad
\S~\ref{S:proofs}.~The proofs
\par}
\normalsize

\section{Introduction}\label{S:introduction}

For the hasty reader, our results are the following.

\begin{result*}
Let $(C < G)$ be a definable, connected, quasi-Frobenius pair of finite Morley rank. Suppose that $G$ is $U_2^\perp$ with involutions. 
\begin{itemize}
\item
If $C$ is soluble but $G$ is not, then $C$ is a Borel subgroup or $(C < G) \simeq (\bK^\times < \PGL_2(\bK))$ (Theorem~\ref{t:CoBo}).
\item
The index $[N_G(C):C]$ is either $1$ or $2$, the Pr\"ufer $2$-rank is $1$, and $C = C_G^\circ(i)$ for its unique involution (Theorem~\ref{t:1or2}).
\item
If $[N_G(C):C)] = 2$, then $\dim G = 3 \dim C$ (Theorem~\ref{t:G3C}).
\end{itemize}
\end{result*}

(Theorem~\ref{t:CoBo} improves work by Zamour.)
We also have a question to experts of finite group theory at the end of the introduction.
We now explain and discuss.

\paragraph{Background.}

The present work uses extremely little from the general theory of groups of finite Morley rank: the definition, basic computations on the rank (denoted $\dim$), connected components and the Morley degree $\deg$, the descending chain condition, and some familiarity with semisimple torsion will suffice. In particular there is a notion of a large set: a definable subset $X \subseteq G$ is \emph{generic} if $\dim X = \dim G$. Importantly, two generic subsets of a connected group must intersect. \cite{CGood} contains enough background. We add the definition of a \emph{Borel subgroup}: a definable, connected, soluble subgroup maximal as such.
As always, $U_2^\perp$ means: no infinite elementary abelian $2$-groups. So $U_2^\perp$ with involutions is our way to say `of odd type'. Recall that in that case, Sylow $2$-subgroups are finite extensions of $2$-tori, the latter being $C_{2^\infty}^d$, where $C_{2^\infty}$ is the usual Pr\"ufer quasi-cyclic $2$-group and for an integer $d$ called the \emph{Pr\"ufer $2$-rank} of $G$. Last, if an involution $i$ acts on a definable, connected subgroup $H$ with $C_H^\circ(i) = 1$, then $i$ inverts $H$ and $H$ is abelian \cite[ex.~12 p.~78]{BNGroups}.

\paragraph{Quasi-Frobenius pairs.}

Quasi-Frobenius pairs were introduced and first studied in \cite{DWGeometry} but they received a proper name only in \cite{ZQuasi}.

\begin{definition*}
A pair of groups $(C < G)$ is \emph{quasi-Frobenius} if the following holds:
\begin{itemize}
\item
$C$ is quasi self-normalising in $G$, viz.~$[N_G(C):C] < \infty$;
\item
$C$ has trivial intersections with proper conjugates (for short, `$C$ is \textsc{ti}'), viz.~$(\forall g \in G)(C^g = C\;\vee\; C^g \cap C = 1)$.
\end{itemize}
\end{definition*}

This extends the classical notion of a \emph{Frobenius group} (which could more accurately be called a Frobenius pair), where $C$ is \textsc{ti} with $N_G(C) = C$. The following pairs are quasi-Frobenius:
\begin{itemize}
\item
$(\bK^\times < \PGL_2(\bK))$ for algebraically closed $\bK$;
\item
$(\SO_2(\bR) < \SO_3(\bR))$ for real closed $\bR$.
\end{itemize}

The study of non-Frobenius, quasi-Frobenius pairs therefore lies at the heart of geometric algebra and should be undertaken seriously, at least under model-theoretic assumptions. In the present paper we focus on finite Morley rank. A complete study in the $o$-minimal case is planned in \cite{DOominimal}. The analogue definition for finite groups would replace $[N_G(C):C] < \infty$ by $[N_G(C):C] = 2$, which is fully justified by our Theorem~\ref{t:1or2}; \emph{this natural condition seems not to have received the attention it deserves}. See the end of the introduction. We return to model theory.

We say that a pair $(C < G)$ is \emph{definable}, resp.~\emph{connected}, if both $G$ and $C$ are.

\begin{remarks*}\leavevmode
\begin{itemize}
\item
As opposed to the Frobenius case \cite[Proposition~11.19]{BNGroups}, there is no `automatic definability' for $C$ in the quasi-Frobenius setting.
Eg.~let $G$ be an infinite vector space over $\bF_p$, treated as a pure group, and $C$ be a hyperplane. Then $C$ is not definable but the pair is quasi-Frobenius.
\item
However, if $G$ is connected and $C$ is definable, then $C$ is connected.
Otherwise $C^\circ$ and $\check{C} = C\setminus C^\circ$ are two \textsc{ti} subsets; however $N_G(C^\circ) = N_G(C) = N_G(\check{C})$, so by a quick dimension computation, both $\bigcup_{g \in G}(C^\circ)^g$ and $\bigcup_{g \in G} \check{C}^g$ are generic. Therefore they meet: a contradiction.
\end{itemize}
\end{remarks*}

\paragraph{State of the art.}
A strong division line separates two classes of quasi-Frobenius configurations. A pair $(H < G)$ is \emph{strongly embedded} if $H$ contains involutions, but $H \cap H^g$ never does for $g \notin N_G(H)$.

\begin{fact}[{\cite[Proposition~1]{DWGeometry}}]\label{f:DW}
Let $(C < G)$ be a definable, connected, quasi-Frobenius pair of finite Morley rank. Suppose that $G$ is $U_2^\perp$ with involutions.
Then:
\begin{itemize}
\item
either $[N_G(C) : C] = 2$, $C$ is abelian, $N_G(C)\setminus C$ is a set of involutions inverting $C$, the Pr\"ufer $2$-rank is $1$, and a lot more is known (`dihedral configuration');
\item
or $[N_G(C):C]$ is odd (possibly $1$) and $(N_G(C) < G)$ is strongly embedded.
\end{itemize}
\end{fact}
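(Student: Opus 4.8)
Write $N=N_G(C)$; since the pair is definable and connected, $C=N^\circ$ and $[N:C]<\infty$. The plan is to trace the involutions of $G$ through the conjugates of $C$, using that the union $\Omega:=\bigcup_{g\in G}C^g$ is generic while the \textsc{ti} hypothesis confines the centraliser of any non-trivial element of a conjugate $C^g$ inside $N_G(C^g)$; the dichotomy then falls out of the parity of $[N:C]$. As a preliminary, $G$ is not abelian — otherwise all conjugates of $C$ coincide, $N=G$, and connectedness forces $C=G$ — so the centraliser fact recalled in the introduction gives $C_G^\circ(i)\neq 1$ for every involution $i\in G$. Moreover, $C$ being \textsc{ti} with $\dim N=\dim C$, distinct conjugates of $C$ meet trivially, whence $\dim\Omega=(\dim G-\dim N)+\dim C=\dim G$ by additivity; so $\Omega$ is generic and each $x\in\Omega\setminus\{1\}$ lies in a unique conjugate, written $C(x)$.

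The key elementary remark is: if $1\neq x\in C^g$ then $C_G(x)\le N_G(C^g)$, since $y\in C_G(x)$ puts $x$ in $C^g\cap(C^g)^y$, forcing $(C^g)^y=C^g$ by \textsc{ti}. Hence for an involution $i$: if $i$ centralises some non-trivial $x\in\Omega$, then $i\in C_G(x)\le N_G(C(x))$ normalises the conjugate $C(x)$, and then either $i\in C(x)$, whence $C_G^\circ(i)\le N_G(C(x))^\circ=C(x)$ is a non-trivial connected subgroup of a conjugate of $C$; or $i$ acts on $C(x)$ from outside, whence the recalled fact forces $i$ to invert $C(x)$ and $C(x)$ to be abelian — unless $C_{C(x)}^\circ(i)\neq 1$, a branch that must be ruled out. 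Contrapositively, if $i$ normalises no conjugate of $C$, then $C_G^\circ(i)$, a non-trivial connected subgroup, meets $\Omega$ only at $1$, so is squeezed into the non-generic set $G\setminus\Omega$.

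Now split on $[N:C]$. \textbf{If $[N:C]$ is odd:} then $N/C$ has odd order, so every involution of $N$ lies in $C=N^\circ$; hence for $h\notin N$ an involution $j\in N\cap N^h$ would lie in $C\cap C^h$, forcing $C^h=C$, a contradiction. Thus, \emph{provided $N$ contains an involution}, $(N<G)$ is strongly embedded. Showing $N$ has an involution is the crux: if not, no conjugate of $N$ does, so by the remark $C_G^\circ(i)\setminus\{1\}\subseteq G\setminus\Omega$ for every involution $i$; one then runs a dimension estimate on the conjugates of $C_G^\circ(i)$ — which would make $\bigcup_g(C_G^\circ(i))^g$ non-generic, hence $\dim N_G(C_G^\circ(i))>\dim C_G^\circ(i)$ — and feeds in the odd-type Sylow structure (every involution lies in a conjugate of a fixed Sylow $2$-subgroup, whose connected part is a maximal $2$-torus, all $2$-tori being conjugate) to contradict the genericity of $\Omega$. \textbf{If $[N:C]$ is even:} by Cauchy choose $n\in N\setminus C$ with $n^2\in C$; the automorphism of $C$ it induces squares to conjugation by $n^2\in C$, so its behaviour on the connected group $C$ is governed by the recalled fact, and one upgrades $n$ to a genuine involution $t\in N\setminus C$ inverting $C$. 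This upgrade needs (i) extracting the involution from the action rather than from $n$ itself (which may fail to be torsion, if $C$ is), and (ii) excluding $C_C^\circ(n)\neq 1$: if $D:=C_C^\circ(n)\neq 1$ then $n\in C_G(D)$, and intersecting $D$ — or a suitable conjugate — with conjugates of $C$ and invoking \textsc{ti} gives a contradiction, which is precisely the ``closure of non-generic matter under centralisers'' of the abstract. Given such a $t$: $C$ is abelian (for $a,b\in C$, $(ab)^{-1}=(ab)^t=a^{-1}b^{-1}$, so $ab=ba$); every $ct$ ($c\in C$) is an involution since $(ct)^2=c\,(tct)=c\,c^{-1}=1$; and as inversion is central in $\operatorname{Aut}(C)$ one checks that $N\setminus C$ is a single coset of involutions inverting $C$, i.e. $[N:C]=2$. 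Finally a maximal $2$-torus of $C$, being inverted by $t$, has a single involution, so the Prüfer $2$-rank is $1$. This is the dihedral configuration.

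The main obstacle is the pair of absorption statements — that in the odd case $N$ cannot avoid every involution, and in the even case $C_C^\circ(n)$ is trivial and an honest involution exists — both resting on the same mechanism: genericity of $\Omega$ forces the non-generic data (involutions, small connected centralisers) into conjugates of $C$, \textsc{ti} then pins each such centraliser to a single conjugate, and married to odd-type Sylow theory this turns the dimension bookkeeping into a contradiction. With these in hand, the parity dichotomy, the strong-embedding check, and the dihedral arithmetic are routine.
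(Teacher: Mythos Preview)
The paper does not prove this statement: it is quoted as a Fact from \cite{DWGeometry} and used as input. So there is no ``paper's own proof'' to compare against; I can only assess your sketch on its merits.

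Your architecture (genericity of $\Omega$, the \textsc{ti} confinement $C_G(x)\le N_G(C(x))$, the parity split, and the strong-embedding check in the odd case) is sound, and the verification that $(N<G)$ is strongly embedded once $[N:C]$ is odd and $N$ contains an involution is correct. But several of the ``absorption'' steps you flag as the crux are not actually carried out, and two of them are wrong as written.

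\textbf{Involutions reach $C$.} Your argument that $N$ must contain an involution is a gesture, not a proof. The clean route---which the paper itself records just before the Key Lemma---is: in odd type every involution is toral, and by genericity of centralisers of decent tori every toral element lies in $\Omega$; hence every involution is bright and $C$ contains one. Your ``dimension estimate on conjugates of $C_G^\circ(i)$'' does not obviously produce a contradiction.

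\textbf{The step $C_C^\circ(n)=1$ is circular.} You propose to rule out $D:=C_C^\circ(n)\neq 1$ either by \textsc{ti} or by ``closure of non-generic matter under centralisers''. The \textsc{ti} route gives only $n\in C_G(D)\le N_G(C)=N$, which you already knew; there is no contradiction. The second route is the Key Lemma of \S\ref{S:lemma}, whose proof \emph{uses} Fact~\ref{f:DW}. So you cannot invoke it here. This is a genuine gap: without $C_C^\circ(n)=1$ you do not get the inverting involution, and the dihedral conclusions collapse.

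\textbf{Two further slips.} First, ``inversion is central in $\operatorname{Aut}(C)$'' yields only $[m,t]\in C_G(C)$ for $m\in N$; since $C_G(C)$ need not equal $C$, this does not force $[N:C]=2$. Second, your Pr\"ufer-rank claim is false as stated: a $2$-torus inverted by $t$ has \emph{all} its involutions fixed by $t$, not a unique one; inversion on $C_{2^\infty}^d$ fixes $2^d-1$ involutions. The Pr\"ufer-rank bound in \cite{DWGeometry} requires a different argument.
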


Throughout, `dihedral' and `strongly embedded' will always refer to this main dichotomy.
Ideally one would hope strongly embedded configurations to be soluble, but this seems out of reach.
It is however expected that the dihedral case should come from the algebraic world.

\begin{conjecture*}[`$A_1$ conjecture' from \cite{DWGeometry}]
Let $(C < G)$ be a definable, connected, quasi-Frobenius pair of finite Morley rank. Suppose that $G$ is $U_2^\perp$ with involutions. Suppose the configuration is dihedral. Then $G \simeq \PGL_2(\bK)$.
\end{conjecture*}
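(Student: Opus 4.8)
The plan is to squeeze out of the dihedral hypotheses enough structure to recognise a rank-one group, and then to invoke one of the available identification theorems; the real obstacle is exactly the phenomenon the conjecture is about, namely that a quasi-Frobenius pair comes with no \emph{a priori} unipotent subgroup. First, note what is already known in the dihedral case: combining Fact~\ref{f:DW} with Theorems~\ref{t:1or2} and~\ref{t:G3C} we have $[N_G(C):C]=2$, Pr\"ufer $2$-rank~$1$, $C$ abelian with $C=C_G^\circ(i)$ for its unique involution $i$, $N_G(C)\setminus C$ a coset of involutions inverting $C$, and $\dim G=3\dim C$. A one-line computation then gives $C_G(i)=N_G(C)$ --- a dihedral centraliser, exactly as in $\PGL_2$ --- shows all involutions of $G$ are conjugate, and identifies the set $\mathcal I$ of involutions with the set of conjugates of $C$ via $j\mapsto C_G^\circ(j)$. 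Since distinct conjugates of $C$ meet trivially and there are $\dim G-\dim N_G(C)=2\dim C$ of them, $\bigcup_g C^g$ has dimension $3\dim C$ and is therefore generic; and for $1\neq g\in C^h$ and $w\in N_G(C^h)\setminus C^h$ one checks $(gw)^2=1$, so $g=(gw)\,w$ exhibits a generic element of $G$ as a product of two involutions. Thus $G$ is generically strongly real with a $\PGL_2$-shaped $2$-structure.

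\emph{Reduction.} Next I would reduce to $G$ simple --- better, to $G$ minimal connected simple, the usual arena --- by a routine analysis of a minimal definable connected normal subgroup $N$: if $N$ has no involution one passes to the still-dihedral quotient pair $(CN/N<G/N)$ and inducts on $\dim G$; if $N$ has an involution one uses \textsc{ti}-ness of $C$ together with the fact that the generic set $\bigcup_g C^g$ generates $G$ to force $N=G$.

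\emph{Identification.} From here there are two natural endgames. If one can show $\dim C=1$, then $\dim G=3$ and $G$, being simple, is $\PGL_2(\bK)$ by Cherlin's classification of connected simple groups of Morley rank $3$. For arbitrary $\dim C$, the target is instead a split $(B,N)$-pair of Tits rank~$1$: produce a definable connected nilpotent \emph{root subgroup} $U$ with $\dim U=\dim C$ and $N_G(U)=U\rtimes C$, set $B=U\rtimes C$, derive a Bruhat decomposition $G=B\sqcup BwB$ (dimensionally consistent with $\dim G=3\dim C$, and built from the genericity of $\bigcup_g C^g$ and the fact that $w$ inverts $C$), and then quote the classification of abelian special Moufang sets of finite Morley rank, which are all projective lines $\mathbb P^1(\bK)$; the dihedral structure forces the root groups to be abelian, and $\dim C=1$ then drops out. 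Either way, the conclusion is $G\simeq\PGL_2(\bK)$.

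\emph{The obstacle.} The hard step --- and the reason the statement is only a conjecture --- is manufacturing $U$, equivalently ruling out \emph{fat} configurations with $\dim C>1$ (abelian groups of Pr\"ufer $2$-rank~$1$ such as $\bK^\times\times\bK^+$ in characteristic $\neq 2$). Nothing in the quasi-Frobenius axioms yields a unipotent subgroup: the conjugates of $C$ behave like maximal tori, not root groups, and with no field on hand there is no \emph{addition} to play against the \emph{multiplication} carried by $C$. The \emph{closure of non-generic matter under centralisers} of the Key Lemma (\S\ref{S:lemma}) is surely part of what is needed to pin down a candidate $U$, but it does not on its own build one; a complete proof seems to require either a tameness / $K^*$ hypothesis --- under which the classification of simple groups of finite Morley rank of odd type and Pr\"ufer $2$-rank~$1$ already gives $\PGL_2(\bK)$ --- or real progress on the algebraicity conjecture.
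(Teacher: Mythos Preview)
The statement you were asked to prove is labelled a \emph{conjecture} in the paper, and the paper does \emph{not} contain a proof of it. It is explicitly listed among the ``Loose ends'' at the end of \S\ref{S:introduction} as ``under exploration''. So there is no proof in the paper to compare your attempt against, and you are right to present your proposal as a strategy with a named obstacle rather than a completed argument.

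Your discussion of the known dihedral structure (Pr\"ufer rank~$1$, $C=C_G^\circ(i)$ abelian inverted by $N_G(C)\setminus C$, $\dim G=3\dim C$, generic strong reality) is accurate and well organised, and your diagnosis of the obstruction --- the absence of any mechanism producing a root subgroup $U$ --- matches what the paper says implicitly (Theorem~\ref{t:CoBo} only identifies $\PGL_2$ once $C$ is \emph{properly} contained in a Borel, i.e.\ once such a $U$ exists). Two cautions about the sketch itself: first, the reduction step is not as routine as you suggest, since passing to $(CN/N<G/N)$ need not preserve the \textsc{ti} property when $N$ has no involutions --- equality $cn=c'n'$ with $c\in C$, $c'\in C^g$, $n,n'\in N$ does not obviously force $C\cap C^g\neq1$; second, your two ``endgames'' are not really alternatives, since showing $\dim C=1$ is essentially the same problem as building $U$ (both amount to ruling out fat tori). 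So the honest summary is: your outline is a reasonable map of the territory, but it does not close the gap, and neither does the paper.
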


This conjecture was devised to remove one of the possible pathologies surviving \cite{DJInvolutive}, namely CiBo$_2$, which stands for `\textbf{C}entraliser of an \textbf{i}nvolution is a \textbf{Bo}rel subgroup with Weyl group of order \textbf{2}'. 
A favourable indication towards $A_1$ was obtained by Zamour, who however phrased it quite differently from the following.

\begin{fact}[{\cite[Th\'{e}or\`{e}me~1.0.6]{ZQuasi}}]\label{f:Zamour}
Let $(C < G)$ be a definable, connected, quasi-Frobenius pair of finite Morley rank. Suppose that $G$ is $U_2^\perp$ with involutions. Suppose further that $C$ is soluble but $G$ is not. \emph{Last, suppose that the configuration is dihedral.}
Then:
\begin{itemize}
\item
%{\textbf{\emph{[CoBo]}}}
either $C$ is a Borel subgroup;
\item
%{\textbf{\emph{[algebraic]}}}
or $(C < G) \simeq (\bK^\times < \PGL_2(\bK))$.
\end{itemize}
\end{fact}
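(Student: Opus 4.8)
The plan is to extract the structure from Fact~\ref{f:DW}, set aside the case where $C$ is a Borel subgroup, and in the remaining case recognise $G$ as $\PGL_2(\bK)$ through its action on $G/B$ for a Borel subgroup $B > C$. Since the configuration is dihedral, Fact~\ref{f:DW} supplies at once: $[N_G(C):C] = 2$; $C$ is abelian (so the solubility hypothesis on $C$ is automatic in this case); with $N := N_G(C)$, every element of $N \setminus C$ is an involution inverting $C$; and the Pr\"ufer $2$-rank of $G$ is $1$. Fix an involution $w \in N \setminus C$. One elementary but recurring observation: if $g \in G$ inverts a non-trivial element $c$ of some conjugate $C^h$, then $c \in C^h \cap (C^h)^g$, so \textsc{ti}-ness forces $g \in N_G(C^h)$; in particular a generic element of $G$ lies in exactly one conjugate of $C$, and the non-generic complement $Z := G \setminus \bigcup_{h} C^h$ is small. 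If $C$ is a Borel subgroup we are done, so from now on assume $C \lneq B$ with $B$ a Borel subgroup; the target is $(C < G) \simeq (\bK^\times < \PGL_2(\bK))$.

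The first substantial step is to show that $C$ behaves like a maximal torus of $G$. Using that $G$ is $U_2^\perp$ with involutions of Pr\"ufer rank $1$, one arranges (after conjugating) that $C$ contains the involution $i$ of the connected part of a Sylow $2$-subgroup, and then proves $C_G^\circ(i) = C$. Here the phenomenon advertised in the abstract --- closure of non-generic matter under centralisers, made precise by the Key Lemma --- is what does the work: it forces the potential extra, necessarily non-generic, matter of $C_G^\circ(i)$ back inside $N_G(C)$; a Frobenius-complement argument inside the resulting soluble group, together with connectedness, then collapses $C_G^\circ(i)$ onto $C$. Consequently $C$ is divisible abelian, its $G$-conjugates are precisely the maximal tori of $G$, and they form a single conjugacy class.

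Next I would analyse $B$. By the structure theory of connected soluble groups of finite Morley rank, $B = U \rtimes C'$ with $U = U(B)$ a non-trivial definable connected nilpotent normal subgroup and $C'$ a maximal torus; after conjugating, $B = U \rtimes C$, and \textsc{ti}-ness of $C$ forces $C_U(C) = 1$, so $(C < B)$ is itself a Frobenius pair with unipotent radical $U$. One then shows that $w$ permutes the Borel subgroups of $G$, that $G = \langle B, B^w \rangle$, that $B \cap B^w = C$, and that $C$ fixes exactly two points of $\Omega := G/B$ and acts regularly on the remaining ones; equivalently, $(B, N)$ is a split $BN$-pair of Tits rank $1$, i.e.\ $G$ is a split Zassenhaus group on $\Omega$. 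Since $C \le B \cap B^w$ is non-trivial, $G$ is not sharply $2$-transitive on $\Omega$, so the recognition of rank-one groups of finite Morley rank in odd type yields $G \simeq \PGL_2(\bK)$ for an algebraically closed field $\bK$; as $C$ is then a maximal torus of $\PGL_2(\bK)$, it is conjugate to $\bK^\times$, whence $(C < G) \simeq (\bK^\times < \PGL_2(\bK))$. (A routine preliminary reduction, using \textsc{ti}-ness of $C$ and non-solubility of $G$, removes $Z(G)$ and any proper normal subgroup, so $G$ may be taken near-simple.)

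The step I expect to be the main obstacle is the construction of the $BN$-pair: establishing $G = \langle B, B^w \rangle$ together with $B \cap B^w = C$ acting regularly on $\Omega$ minus two points, and the Bruhat relation $B w B w B \subseteq B \cup B w B$, all simultaneously and without circularity. This rests on a generic-covering count that links the \emph{global} \textsc{ti} hypothesis on $C$ to the inverting involution $w$, and it is precisely here that the Key Lemma earns its keep: controlling intersections such as $B \cap B^g$, and keeping the non-generic matter $Z$ from growing when one takes centralisers. Once that is secured, the recognition theorem and the identification of $C$ with $\bK^\times$ are routine.
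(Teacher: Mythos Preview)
Your overall target---building a rank-one $BN$-pair and invoking a $\PGL_2$ identification---is also the paper's, but the route diverges at the decisive step and your sketch does not close it. The paper does \emph{not} attempt to compute $B \cap B^w$ for the fixed Weyl involution $w$. Instead it runs Bender's method: for a \emph{generic} involution $k \in i^G$ it studies $T_k = \{b \in B : b^k = b^{-1}\}$, shows by a counting argument that $\dim T_k \geq \dim B - \dim C_G(i)$ generically, and then applies the Key Lemma to prove that $T_k$ contains no dark element. Hence $T_k$ lies in a single conjugate of $C$, and in fact $T_k = (B\cap B^k)^\circ$ \emph{is} a conjugate of $C$. In parallel, the paper identifies $B'$ as exactly the dark part of $B$, proves $N_G^\circ(B') = B$, and obtains the intersection control $B' \cap B^g = 1$ for $g \notin N_G(B)$; only then does a dimension equality force the Bruhat decomposition $G = B \sqcup BgB'$.

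Your proposal asserts $B \cap B^w = C$ and $G = \langle B, B^w\rangle$ directly, with the justification deferred to a ``generic-covering count'' and the Key Lemma. You are right that this is the obstacle; the problem is that nothing you have written gives a mechanism for it. The Key Lemma controls centralisers of \emph{dark, strongly real} elements; it says nothing about $B \cap B^g$ until one has produced such elements inside that intersection, and that is precisely what the $T_k$ analysis (together with the identification of $B'$ as dark matter) accomplishes. Without it, there is no starting point for the count you invoke.

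Two smaller issues. Your appeal to the Key Lemma to get $C_G^\circ(i) = C$ is misplaced: $i$ is bright, elements of $C_G^\circ(i)$ need be neither dark nor strongly real, and $C_G^\circ(i)$ is not known to be soluble, so your ``Frobenius-complement argument inside the resulting soluble group'' has no soluble group to work in. (In the dihedral case $C = C_G^\circ(i)$ is already part of the package of Fact~\ref{f:DW}, so just cite it.) And the decomposition ``$B = U \rtimes C'$ with $U = U(B)$'' is not well-founded in finite Morley rank: there is no canonical unipotent radical. The correct tool is Fact~\ref{f:soluble} applied to the induced pair $(C < B)$, giving $B = B' \rtimes C$ with $B'$ abelian---and this $B'$, not an undefined $U(B)$, is what the argument must track.
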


The striking analogy between Fact~\ref{f:Zamour} in this form and the series \cite{CJTame, DGroupes1, DJInvolutive} suggests that \emph{quasi-Frobenius configurations of finite Morley rank behave like $N_\circ^\circ$-groups}.
Zamour has replaced a global smallness assumption by a local, structural one; he substituted the use of unipotence in \cite[Proposition~3]{DJInvolutive} with his classification of soluble, quasi-Frobenius pairs (Fact~\ref{f:soluble} below). Hence \emph{ad hoc} reasoning on an elementary Jordan decomposition entirely substitutes Burdges' version of Bender's local analysis. This is demonstrated in the key lemma (\S~\ref{S:lemma}).

Fully exploiting the analogy dictates what to prove next: remove assumptions on the Weyl group, study strongly embedded configurations, and get dimension estimates in the non-algebraic case. The first two results rely on the key lemma of \S~\ref{S:lemma}, which brings unity to the present work.

\paragraph{Our first result.}

Zamour's proof of Fact~\ref{f:Zamour} can be greatly simplified. Moreover, we remove an unnecessary assumption.

\begin{theorem}\label{t:CoBo}
Let $(C < G)$ be a definable, connected, quasi-Frobenius pair of finite Morley rank. Suppose that $G$ is $U_2^\perp$ with involutions. Suppose further that $C$ is soluble but $G$ is not.
Then the configuration is dihedral and:
\begin{itemize}
\item
%{\textbf{\emph{[CoBo]}}}
either $C$ is a Borel subgroup (`\emph{CoBo}$_2$');
\item
%{\textbf{\emph{[algebraic]}}}
or $(C < G) \simeq (\bK^\times < \PGL_2(\bK))$.
\end{itemize}
\end{theorem}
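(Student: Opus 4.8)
}
The plan is to first dispose of the strongly embedded case, then run an inductive identification inside the dihedral case. For the dichotomy: by Fact~\ref{f:DW} the configuration is either dihedral or strongly embedded with $[N_G(C):C]$ odd. So the first task is to rule out strong embedding under the hypothesis that $C$ is soluble but $G$ is not. Here I would invoke the key lemma of \S~\ref{S:lemma} (whatever its precise statement — presumably the ``closure of non-generic matter under centralisers'' phenomenon advertised in the abstract), combined with the classification of soluble quasi-Frobenius pairs (Fact~\ref{f:soluble}, referenced but not displayed in the excerpt). The point is that in a strongly embedded configuration the centraliser $C_G^\circ(i)$ of the central involution $i$ of $C$ should be forced, via the key lemma and TI-ness, to be soluble and in fact to coincide with (a finite-index piece of) $C$ or to generate $G$ with a conjugate; a Jordan-decomposition argument on the semisimple torsion inside $C$ then yields solubility of $G$, a contradiction. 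This is the step where Zamour used Burdges' version of Bender's method and where the present paper claims the key lemma is a clean substitute; accordingly I expect \emph{this} to be the main obstacle — extracting enough structure from strong embedding plus TI to run the elementary Jordan decomposition argument without heavy unipotence theory.

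Once the configuration is known to be dihedral, Fact~\ref{f:DW} gives a great deal for free: $[N_G(C):C]=2$, $C$ abelian, $N_G(C)\setminus C$ a class of involutions inverting $C$, and Prüfer $2$-rank $1$. Now I would follow the $N_\circ^\circ$-style template (the analogy with \cite{CJTame, DGroupes1, DJInvolutive} flagged in the introduction). Pick the involution $i$ and set $C = C_G^\circ(i)$; the involutions in $N_G(C)\setminus C$ invert $C$. If $C$ is a Borel subgroup we are in the CoBo$_2$ branch and there is nothing further to prove for the trichotomy — the content is the alternative. So assume $C$ is \emph{not} a Borel; then $C$ is properly contained in some Borel $B$, and one analyses $B$ and its Weyl group. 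Using TI-ness of $C$ and the fact that $C$ is abelian and self-centralising (as $C = C_G^\circ(i)$ with $C$ abelian forces $C = C_G^\circ(C)^\circ$ by a standard argument), one shows $B$ must normalise $C$, hence $B \le N_G(C)$, forcing $B^\circ = C$: contradiction with $C$ not Borel. Thus the only way $C$ fails to be a Borel is structurally constrained, and the remaining possibility is pinned down by recognising the $\PGL_2$ action.

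For the identification with $(\bK^\times < \PGL_2(\bK))$ in the non-CoBo$_2$ case: with $C$ abelian of Prüfer $2$-rank $1$ and $N_G(C) = C \rtimes \langle w \rangle$ dihedral-like, $C$ together with a conjugate $C^g$ (whose intersection is trivial by TI) and the involutions generate a configuration on which one wants to see a $BN$-pair of rank one, or at least an action on a strongly minimal set (a ``geometry'' in the sense of \cite{DWGeometry}) with the right stabiliser structure. The standard route is: show $G$ acts $2$-transitively (or sharply $3$-transitively) on $G/B$ for $B$ a Borel containing $C$, identify the permutation group, and apply the known recognition theorem for $\PGL_2$ of an algebraically closed field among groups of finite Morley rank acting sharply $3$-transitively (Hrushovski's theorem, used throughout \cite{DJInvolutive}). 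The TI condition on $C$ is exactly what guarantees the sharpness. I expect this last identification to be comparatively routine once the geometry is set up; the genuine work is the two earlier steps, and of those, eliminating strong embedding via the key lemma rather than Bender's method is the crux, since everything in the paper is organised around that simplification.
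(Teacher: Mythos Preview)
Your plan has the architecture backwards and contains a genuine error in the dihedral analysis.

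\textbf{Architecture.} You propose to first kill the strongly embedded branch and only then work inside the dihedral one. The paper does the opposite, and deliberately so (see the opening paragraph of \S\ref{S:CoBo}). It fixes a Borel $B \geq C$, assumes $B > C$, and analyses $B$ directly: by Fact~\ref{f:soluble} the pair $(C < B)$ splits as $B = B' \rtimes C$; the key lemma shows $B' \setminus \{1\}$ consists of dark elements and gives the intersection control $B' \cap B^g = 1$ for $g \notin N_G(B)$; then Bender's $T_k$-computation (Proposition~\ref{p:genericinvolutions}) produces generically many involutions $k \notin N_G(B)$ with $T_k$ large, and one proves $T_k$ is a conjugate of $C$ normalised but not centralised by $k$. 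That last sentence \emph{is} the proof that the configuration is dihedral --- it is a consequence of the Borel analysis, not a hypothesis fed into it. Your ``main obstacle'' simply does not arise.

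\textbf{The error.} In your dihedral step you assert: since $C$ is abelian and self-centralising, any Borel $B > C$ must normalise $C$, hence $B \leq N_G(C)$ and $B^\circ = C$. This is false, and the intended model refutes it: in $\PGL_2(\bK)$ the torus $\bK^\times$ is its own connected centraliser, yet the upper-triangular Borel does not normalise it. Self-centralising abelian subgroups of soluble groups need not be normal (they need not be Carter, only Cartan). Your paragraph then becomes logically incoherent: you claim to derive a contradiction from ``$C$ not Borel'' and in the next breath speak of ``the remaining possibility'' being $\PGL_2$. A contradiction leaves no remaining possibility.

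\textbf{What actually happens when $C < B$.} One does not force $B$ down to $C$; one identifies $G$. The key lemma gives enough grip on dark matter inside $B'$ to obtain a Bruhat decomposition $G = B \sqcup B g B'$ and a rank-$1$ $(B,N)$-pair, whence $G \simeq \PGL_2(\bK)$. Your gesture towards sharp $3$-transitivity and Hrushovski is in the right spirit for the endgame, but the substance --- the intersection control and the $T_k$ analysis via the bright/dark dichotomy --- is missing from your outline.
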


`CoBo' means `the \textbf{Co}mplement is a \textbf{Bo}rel subgroup', by analogy with the classical pathologies `CiBo' discussed above.
% One should not be overly enthusiastic about this theorem.
% \paragraph{On Zamour's `strong' assumption.}
% Zamour's `strongly standard' condition on existence of a Borel subgroup $B > C$ is more strong than standard in our opinion.
Zamour makes claims about a possible inductive strategy for outright elimination of CoBo$_2$, which we deem bold. He suggests proving that $C$ is properly contained in a Borel subgroup.
But as predicted by \cite{CJTame, DGroupes, DJInvolutive}, the group $C$ (if soluble) should be maximal as such, viz.~a Borel subgroup. This has been known since \cite{CJTame} though proved by different methods in different settings.
Zamour's `weaker' assumption that all Borel subgroups are generic is already known to contradict $N_\circ^\circ$-ness \cite[Proposition~4.1.35]{DGroupes}. In short if Zamour's purported strategy worked, it would eliminate CiBo$_2$ by a naive argument and we do not believe there is a naive one. (We would love to be proved wrong and offer cheese in reward.)

\paragraph{Our second result.}

Although quasi-Frobenius pairs are mostly interesting in the dihedral case \emph{via} the $A_1$ conjecture, we also radicalise strongly embedded configurations. We prove the following.

% \begin{theorem}\label{t:1or2}
% Let $(C < G)$ be a definable, connected, quasi-Frobenius pair of finite Morley rank. Suppose that $G$ is $U_2^\perp$ with involutions.
% Suppose that the configuration is \emph{not} dihedral.
% Then $(C < G)$ is a Frobenius pair, and in particular:
% \begin{enumerate}[label=(\arabic*)]
% \item
% the Pr\"ufer $2$-rank of $G$ is $1$;
% \item
% $C = C^\circ(i)$ where $i$ is the unique involution of $C$;
% \item
% $N_G(C) = C$.
% \end{enumerate}
% \end{theorem}

% \begin{corollary*}
\begin{theorem}\label{t:1or2}
Let $(C < G)$ be a definable, connected, quasi-Frobenius pair of finite Morley rank. Suppose that $G$ is $U_2^\perp$ with involutions.
Then the index $[N_G(C):C]$ is either $1$ (`strongly embedded') or $2$ (`dihedral'). In either case the Pr\"ufer $2$-rank is $1$ and $C = C_G^\circ(i)$ for its unique involution.
\end{theorem}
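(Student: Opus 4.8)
The plan is to split along the dichotomy of Fact~\ref{f:DW}: the dihedral branch is closed quickly by a genericity argument, while the strongly embedded branch carries the real weight and is where the Key Lemma of \S~\ref{S:lemma} intervenes. Write $M = N_G(C)$; by the Remarks $M^\circ = C$, and since $C = M^\circ$ is characteristic in $M$ one has $N_G(M) = M$. The basic consequence of \textsc{ti}-ness is that $C_G(c) \le M$, hence $C_G^\circ(c) \le C$, for every $1 \ne c \in C$. It suffices to establish three things: (a) $C$ contains exactly one involution; (b) $[M:C] \le 2$; (c) the Pr\"ufer $2$-rank is $1$. Indeed, granting (a), if $i$ is the unique involution of $C$ then $i$ is characteristic in $C$, so $M \le N_G(\langle i\rangle) = C_G(i)$; together with $C_G(i) \le M$ this forces $C_G(i) = M$ and $C_G^\circ(i) = M^\circ = C$, which with (a)--(c) is exactly the statement.

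In the dihedral case (b) and (c) are already part of Fact~\ref{f:DW}, so only (a) remains. If $C$ had no involution, its Sylow $2$-subgroup would be trivial; but then, by the same kind of dimension computation as in the Remarks, the $G$-conjugates of the coset-union $M\setminus C$ form a generic subset of $G$, as does $\bigcup_{g}C^g$. Two generic subsets of the connected group $G$ meet, producing an involution inside some $C^g$, hence, after conjugating back, inside $C$ --- a contradiction. So $C$ has an involution $i$; as $C$ is abelian, $C\le C_G(i)$ and so $C_G^\circ(i)=C$. Uniqueness of the involution in $C$ is part of what is known of the dihedral configuration (Fact~\ref{f:DW}); concretely, a second involution $i'\in C$ together with an involution $t\in M\setminus C$ (which inverts, hence centralises, $i$ and $i'$) would give $\langle i,i',t\rangle\simeq (\mathbb{Z}/2)^3$, incompatible with the structure of Sylow $2$-subgroups of Pr\"ufer $2$-rank $1$. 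This disposes of the dihedral case.

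Now suppose the configuration is strongly embedded, so $[M:C]$ is odd. From the theory of strongly embedded subgroups, $M$ contains a Sylow $2$-subgroup $S$ of $G$, all involutions of $G$ are conjugate, and $C_G(i)\le M$ for every involution $i\in M$ (if $g\in C_G(i)$ then $i\in M\cap M^g$ is an involution, so $g\in N_G(M)=M$). Since $C$ is normal in $M$ with odd index, $S\le C$; hence $C$ contains a Sylow $2$-subgroup of $G$ and, as $G$ has involutions, at least one involution, and $C_G^\circ(i)\le C$ for every involution $i$ of $C$. Here the Key Lemma of \S~\ref{S:lemma} takes over, in the spirit advertised by the abstract: the non-generic matter attached to $C$ is stable under taking centralisers, and the intended use is to upgrade the inclusion $C_G^\circ(i)\le C$ to an equality $C_G^\circ(i)=C$ for an involution $i\in C$, that is, to put $i\in Z(C)$. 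Granting this, a second involution $i'\in C$ would also be central, so $\langle i,i'\rangle\simeq(\mathbb{Z}/2)^2$ lies in $Z(C)$ and $C\le C_G(\langle i,i'\rangle)=C_G(i)\cap C_G(i')\le M$; pitting the $G$-conjugacy of $i$ and $i'$ against Sylow fusion and the rigidity of the now-central $2$-torsion of $C$ yields a contradiction, giving (a); then $C_G(i)=M$, $C=C_G^\circ(i)$, and the Pr\"ufer $2$-rank of $C$, hence of $G$, is $1$, which is (c). For (b): were $[M:C]>1$, an element $x\in M\setminus C$ of odd prime order would centralise the unique involution $i$ of $C$, hence lie in $C_G(i)=M$, and a further application of the closure phenomenon of the Key Lemma to $x$ would force $x\in C$, a contradiction; thus $[M:C]=1$.

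The genuine obstacle, as this outline makes clear, is the strongly embedded case, and within it the two uses of the Key Lemma: promoting $C_G^\circ(i)\le C$ to an equality, and then killing the odd part of $M/C$; everything else is inherited from Fact~\ref{f:DW} or is routine $2$-structure. A subsidiary point demanding care is setting up the fusion and $2$-rank argument for uniqueness of the involution so that it does not circularly presuppose the determination of $C_G(i)$.
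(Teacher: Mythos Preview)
Your dihedral case is fine and close in spirit to how the paper handles it. The strongly embedded case, however, has a genuine gap: you invoke the Key Lemma twice in ways it cannot be used, and you skip the actual argument.

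First, the Key Lemma applies to a \emph{dark} element \emph{inverted by an involution}. An involution $i \in C$ is bright, so the Lemma says nothing about $C_G^\circ(i)$; you cannot use it to ``upgrade $C_G^\circ(i)\le C$ to an equality''. In the paper this equality is obtained by a pure dimension count: one shows $\check I = I\setminus C$ is generic in $I$, that the product map $C\times\check I\to G$ is injective, hence $C\cdot\check I$ is generic; equality of dimensions then forces $\dim C = \dim C_G^\circ(i)$. No Key Lemma here.

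Second, your proposed elimination of $[M:C]>1$ is not an argument. Taking $x\in M\setminus C$ of odd prime order and noting $x\in C_G(i)=M$ gives nothing new; and ``a further application of the Key Lemma to $x$'' is illegitimate until you know $x$ is both dark and strongly real, neither of which you establish. The paper does secure a $\sigma\in N\setminus C$ which is strongly real and dark, but this already requires a non-trivial genericity step (the set $I_\sigma=(C\sigma I)\cap\check I$ is shown generic in $I$, whence such a $\sigma$ exists). And even then the Key Lemma only yields $C_C^\circ(\sigma)=1$, not $\sigma\in C$. The contradiction comes from a substantial double-counting argument you have entirely omitted: one exhibits a \emph{second} generic subset $C\,\check Z\,C$ of $G$ (with $\check Z=C_G(\sigma)\setminus N$), bounds $\dim I$ via a definable map $I_\sigma\to\sigma C$ sending $k$ to the unique element of $\sigma C$ it inverts, and finally intersects $C\check I$ with $C\check Z C$ to reach an equation that, together with Steinberg's torsion theorem, makes $\sigma$ bright --- contradiction.

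In short: you have correctly located the difficulty and the relevance of the Key Lemma, but misidentified its point of application and replaced an elaborate genericity argument (adapting \cite{BCJMinimal}) by a hand-wave.
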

% \end{corollary*}

Hence, if $G$ is $U_2^\perp$ with involutions, then `quasi-Frobenius, strongly embedded' reduces to `Frobenius'.

\begin{remarks*}\leavevmode
\begin{itemize}
\item
We do not claim that Sylow $2$-subgroups are connected; they could be as in $\SL_2(\bK)$ in characteristic not $2$. A highly pathological Frobenius pair $(C < G)$ with $C \simeq \SL_2(\bK)$ has been haunting researchers in the topic, and has not yet vanished.
\item
We do not exclude the possibility of quasi-Frobenius pairs with no involutions and $N_G(C) > C$. (One would expect to use \cite[Theorem~5]{BCSemisimple} to go any further, which is wide open.)
\item
Elimination of \cite{DJInvolutive}'s CiBo$_1$, even assuming disjointness of $C_G(i)$, is a notouriously challenging open problem. In the quasi-Frobenius setting, things could even be worse with a non-soluble $C$.
\end{itemize}
\end{remarks*}

% In short, even quasi-Frobenius pairs with $G$ an $N_\circ^\circ$-group and $C$ strongly embedded in $G$ are likely to be very hard to eliminate. Recall that the $A_1$ conjecture is about the dihedral case, where $C$ is not strongly emebedded.

\paragraph{Our third result.}

The following is absent from \cite{DWGeometry} and \cite{ZQuasi}.

\begin{theorem}[{\cite[Corollaire~4.1.31]{DGroupes}; \cite[Corollaire~3.27]{DGroupes2}}]\label{t:G3C}
Let $(C < G)$ be a definable, connected, quasi-Frobenius pair of finite Morley rank. Suppose that $G$ is $U_2^\perp$ with involutions, and the configuration is dihedral. Then $\dim G = 3 \dim C$.
\end{theorem}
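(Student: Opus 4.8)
\textbf{Proof proposal for Theorem~\ref{t:G3C}.}

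The plan is to exploit the dihedral structure provided by Fact~\ref{f:DW}: in this case $[N_G(C):C] = 2$, $C$ is abelian, the Pr\"ufer $2$-rank is $1$, and $N_G(C) \setminus C$ consists of involutions inverting $C$. Let $D = N_G(C)$, so $D = C \rtimes \langle w \rangle$ for some involution $w$, and write $i$ for the unique involution of $C$ (which exists since the Pr\"ufer $2$-rank is $1$ and $C = C_G^\circ(i)$ by Theorem~\ref{t:1or2}). The target $\dim G = 3 \dim C$ is exactly the dimension count for $\PGL_2$, so the strategy is to mimic the Bruhat-style decomposition $G = C \cup C w C$ at the level of dimensions, i.e.\ to show that the double coset $CwC$ (equivalently the set of conjugates of $w$, or the `big cell') is generic of dimension $2\dim C$, while $C$ itself contributes one more $\dim C$.

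First I would count conjugates of the involutions. Since $C$ is \textsc{ti} and abelian with a unique involution $i$, and $C = C_G^\circ(i)$, the number of $G$-conjugates of $i$ has dimension $\dim G - \dim C_G(i) = \dim G - \dim C$. Next, the key input is to understand the involutions in $D \setminus C$. Each such involution $w'$ inverts $C$; a standard computation (the map $c \mapsto c^{-1}\cdot c^{w'} = c^{-2}$ on the $2$-divisible part, combined with control of $2$-torsion) shows $C \setminus C^2$ is finite, so $wC$ is essentially a single $C$-conjugacy-orbit of involutions together with the coset of involutions $wi, \dots$; in any case $\dim(wC) = \dim C$ and $C_G(w) \cap C$ is finite, so $\dim C_G(w) $ needs to be pinned down. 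Using Theorem~\ref{t:1or2} applied with $w$ in place of $i$ — more precisely, using that $w$ is an involution and $C_G^\circ(w)$ again cannot contain a conjugate of $C$ transversally — one argues $\dim C_G(w) = \dim C$ as well (the involutions are all conjugate, or at least all have centralisers of the same dimension $\dim C$, since $G$ has Pr\"ufer rank $1$ so by Bender-style torsion arguments all involutions are conjugate). Then the set $w^G$ of conjugates of $w$ has dimension $\dim G - \dim C$.

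Now I would assemble the count. The product map $\mu\colon C \times w^G \to G$, $(c, w') \mapsto c w'$, should be generically finite-to-one onto its image: if $c_1 w_1' = c_2 w_2'$ then $w_2' w_1' = w_2' c_2^{-1} c_1 w_1' \in$ (something controlled), and \textsc{ti}-ness of $C$ forces the fibres to be finite — this is where the quasi-Frobenius hypothesis does the real work, exactly as \textsc{ti}-ness of a Cartan subgroup does in $\PGL_2$. Hence $\dim(\operatorname{im}\mu) = \dim C + (\dim G - \dim C) = \dim G$, so that set is generic; call it $\Omega$. On the other hand one shows $G \setminus \Omega$ is covered by $\bigcup_{g} C^g$, whose dimension is $\dim C + (\dim G - \dim N_G(C)) = \dim G - \dim C < \dim G$ when $\dim C > 0$ (and $\dim C > 0$ because $C$ contains the $2$-torus of Pr\"ufer rank $1$, hence is infinite). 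Comparing: every element of $G$ either lies in a conjugate of $C$ or is a product $c w'$ with $w'$ an involution-conjugate of $w$; feeding $\dim \Omega = \dim G$ back and re-examining the fibre count of $\mu$ more carefully (the generic fibre has dimension $0$ forces, via $\dim C \times w^G = \dim G$, the relation $2\dim C + (\dim G - 2\dim C) = \dim G$, which is vacuous) — so the genuine equation must come from a \emph{second} constraint.

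The second constraint, and the step I expect to be the main obstacle, is producing the relation $\dim G = 3 \dim C$ rather than merely $\dim G \ge 2\dim C$. The cleanest route is: show that for generic $g \in G$, the intersection $C^g \cap D$ (or $C^g \cap Cw$) is nonempty and in fact the conjugate tori sweeping out the big cell are parametrised by $w^G / (\text{something of dimension } \dim C)$. Concretely, following \cite[Corollaire~4.1.31]{DGroupes}, one fixes the involution $i \in C$, looks at the definable set $i^G \cap iC = \{ i^g : i^g \in iC\}$ and shows it has dimension $\dim C$ (each element $ic$ with $ic$ an involution, i.e.\ $c$ inverted by $i$, i.e.\ $c \in C$ since $i$ is central in $C$ — wait, $i$ central in $C$ means $i$ does \emph{not} invert $C$; rather one uses $w$, with $wC$ a coset of involutions modulo the finite $C\setminus C^2$). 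The honest computation identifies $G$ as the disjoint-up-to-lower-dimension union $\bigsqcup_{w' \in w^G} C^{?}$ where the conjugates of $C$ intersecting the big cell are indexed by a $\dim C$-dimensional family, giving $\dim(\text{big cell}) = \dim C + \dim C = 2\dim C$ and then $\dim G = \dim C + 2 \dim C = 3\dim C$ after adding back the `small cell' $\bigcup C^g$ — no: the correct bookkeeping is that the generic element has a unique expression reflecting the three ``coordinates'' $(C, w, C)$ of Bruhat, each of dimension $\dim C$, with fibres finite by \textsc{ti}-ness used twice. So the real work is a careful double-coset fibre analysis showing the map $C \times (w^G) \to G$ has generic fibre of dimension exactly $\dim C$ — equivalently $C_G(w)$ meets the relevant conjugates in a $\dim C$-set — which is precisely the content extracted from \cite[Corollaire~3.27]{DGroupes2}; I would lean on the structural facts in Fact~\ref{f:DW} (``a lot more is known'') to supply that, and on the Key Lemma of \S\ref{S:lemma} to control centralisers of the non-generic torsion that appears.
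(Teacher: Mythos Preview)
Your approach has a structural flaw that makes the Bruhat-style count collapse. In the dihedral configuration the involution $w \in N_G(C)\setminus C$ \emph{inverts} $C$; in particular $w$ normalises $C$, so for $c_1, c_2 \in C$ one has $c_1 w c_2 = c_1 c_2^{-1} w$ and hence $CwC = Cw$ is merely a single coset of dimension $\dim C$, not $2\dim C$. There is no ``big cell'' of the expected shape because $C$ plays the role of a torus, not of a Borel subgroup, and the Weyl element already lies in its normaliser. You noticed yourself that the first computation $\dim(C\cdot w^G) = \dim C + (\dim G - \dim C) = \dim G$ is vacuous; the attempted ``second constraint'' never materialises into an equation: the claim that the generic fibre of $C \times w^G \to G$ has dimension ``exactly $\dim C$'' is a hope, not an argument, and no definable map with that fibre behaviour is ever produced. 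Also, the paper's proof of this theorem is explicitly \emph{independent} of the Key Lemma, so invoking it would not help here.

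The paper's route is entirely different and uses only involution geometry. For a generic pair $(i,j)$ of involutions there exists, by the dihedral structure (\cite[Proposition~1(ix)]{DWGeometry}), an involution $k$ commuting with both; one checks it is \emph{unique} (two such would force a four-group or $i=j$). This gives a definable map $f\colon \bJ \to I$ from a generic subset $\bJ \subseteq I\times I$. The fibre over $k$ consists of pairs of involutions in $C_G(k)\setminus C_G^\circ(k)$, hence has dimension exactly $2\dim C$. The fibre equation
\[
2\dim I = \dim \bJ = \dim I + 2\dim C
\]
yields $\dim I = 2\dim C$, and since $C = C_G^\circ(i)$ one concludes $\dim G = \dim I + \dim C = 3\dim C$.
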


% With the exact same proof---which we do not reproduce.
\begin{remarks*}\leavevmode
\begin{itemize}
\item
This is of course unambitious compared to the $A_1$ conjecture, which would immediately imply Theorem~\ref{t:G3C}.
\item
% The proof relies on the assumption that the product of a generic pair of involutions lies in $\bigcup C^g$. The latter property actually characterises the dihedral case \cite[Proposition~1~(vii)]{DWGeometry}, so t
There is no dimension estimate in the strongly embedded case. (See the proof.)
\end{itemize}
\end{remarks*}

% \begin{remarks*}
% The generic strongly real element is dark in the strongly embedded case. Indeed, two random involutions belong to distinct copies $C_i \neq C_j$: if $ij$ belongs to a tile $C$, then $i, j \in C$, a contradiction. So in the strongly embedded case, no information can be communicated between tiles via their involutions. (In the dihedral case it can: through the channel of $(N\setminus C)$ cosets.)
% \end{remarks*}

\paragraph{Loose ends.}
We list issues not resolved by the present work.
\begin{enumerate}
\item
Definably linear, quasi-Frobenius pairs. Zamour has some unpublished material in his PhD.
\item
Study of quasi-Frobenius pairs if there are no involutions; in particular proving that $N = C$. (This did not seem of utmost interest to all three authors.)
\item
Solubility in the strongly embedded case. This is likely to be very hard as one may add strong assumptions such as $N_\circ^\circ$ and still get a hard problem \cite{DJInvolutive}.
\item
The $A_1$ conjecture. Under exploration by Wiscons and D.
\end{enumerate}

Moreover, Theorem~\ref{t:1or2} motivates the following question in finite group theory.

\begin{question*}
What can be said about quasi-Frobenius pairs where $G$ is finite and $[N_G(H):H] = 2$? The question is asked both for soluble and non-soluble $G$; both with and without the \textsc{cfsg}; both with and without character theory.
\end{question*}

\paragraph{Acknowledgements.}
The key lemma and Theorem~\ref{t:CoBo} were proved in February 2022 during an \textsc{ecos}-Nord visit of D.~to C.~in Bogot\'{a}, D.~C. We warmfully thank the \textsc{ecos}-Nord programme, and fromagerie Laurent Dubois. Theorem~\ref{t:1or2} was proved in September-October 2022 during a `\textsc{cnrs} d\'{e}l\'{e}gation' of D.~to visit A.~in Lyon.
Theorem~\ref{t:G3C} was added when C.~came to Lyon in April-May 2023 to visit A.~and D.
D.~heartfully thanks Bruno Poizat and Ulla Karhumäki for hospitality and conversation.

\section{The Key Lemma}\label{S:lemma}

The proofs of Theorems~\ref{t:CoBo} and \ref{t:1or2} rely on a common key lemma, missed in \cite[Th\'{e}or\`{e}me~4.5.5]{ZQuasi}.

\begin{notation*}
Let $\Gamma = \bigcup_{g \in G} C^g$ be the set of \emph{bright} elements and $\Delta = G \setminus \Gamma$ be the set of \emph{dark} elements.
\end{notation*}

Light matter is generic, viz.~$\Gamma \subseteq G$ is a generic inclusion. Combining this with genericity of centralisers of decent tori \cite{CGood}, we find that every toral element is bright. In particular, if $G$ is $U_2^\perp$, then any involution lies in a unique conjugate of $C$.
% (If necessary, more details are in the proof of \cite[Proposition~1]{DWGeometry}.)
Dark elements do not give sufficient grip; but dark, strongly real elements will. They exist by \cite[Theorem~A]{DWGeometry}.

\begin{lemma*}
Let $(C < G)$ be a definable, connected, quasi-Frobenius pair of finite Morley rank. Suppose that $G$ is $U_2^\perp$ with involutions. Let $d \in G$ be dark and inverted by an involution $i$. Then $A = C_G^\circ(d)$ is dark, abelian and inverted by $i$. Moreover, for $a \in C_G^\circ(d) \setminus \{1\}$, one has $C_G^\circ(d) = C_G^\circ(a)$.
\end{lemma*}
% One should also remember that centralisers are infinite \cite[Proposition~1.1]{BBCInvolutions}.
\begin{proof}
Say $d = ij$ with $j$ another involution, and let $A = C_G^\circ(d)$.
Let $C_i$ be the unique conjugate of $C$ containing $i$, and define $C_j$ likewise.
Then $i$ normalises $A$, and $C_A^\circ(i) = C_G^\circ(i, j) \leq C_i \cap C_j$. If $C_i = C_j$ then $d = ij \in C_i$ is bright, a contradiction. Thus $C_i \neq C_j$ and $C_A^\circ(i) = 1$; therefore $i$ inverts $A$ (which is abelian). So does $j$.

Suppose some $c \in A$ is bright. Up to conjugacy we may suppose $c \in C$; now $i$ and $j$ invert $c$ so they normalise $C$. Let $N = N_G(C)$.
If $[N:C]$ is odd, then $i, j, d \in C$: a contradiction. So by Fact~\ref{f:DW}, the configuration is dihedral and in particular $[N:C] = 2$. Now exactly one of $i, j$ is in $C$ and the other in $N\setminus C$. But the involution in $C$ is unique so $d = ij$ is an involution, a contradiction again. Therefore $A$ consists of dark elements.

Finally let $a \in A\setminus\{1\}$. We have proved that $a$ is dark and inverted by $i$. So the above applies: $C_G^\circ(a)$ is inverted by $i$, and $j$, hence centralised by $d$. Thus $C_G^\circ(a) \leq C_G^\circ(d) \leq C_G^\circ(a)$ by abelianity. We are done.
\end{proof}
\begin{remarks*}\leavevmode
\begin{itemize}
% \item
% By \cite[Theorem~A]{DWGeometry}, there must exist $d$ as in the statement.
\item
One easily proves that $A$ is \textsc{ti}, and maximal as a definable, connected, nilpotent group.
% Both are features of unipotence in $\PGL_2(\bK)$. (`Maximal nilpotent' can be replaced by `maximal with an infinite centre'.)
\item
A naive Jordan decomposition could be deduced; it is implicit in the identification process of Theorem~\ref{t:CoBo}.
\item
By \textsc{ti}-ness and non-genericity of dark matter, $A$ is \emph{not} almost self-normalising in $G$.
However Borel subgroups containing $A$ will tend to be non-generic.
See our discussion of Theorem~\ref{t:CoBo} in the introduction.
\end{itemize}
\end{remarks*}

\section{The proofs}\label{S:proofs}

All proofs are independent; the first two rely on the key lemma.

\subsection{Proof of Theorem~\ref{t:CoBo}}\label{S:CoBo}

% \subsection{Generic involutions}
A possible proof is to invoke Theorem~\ref{t:1or2}, then rely on two subcases: Fact~\ref{f:Zamour} and \cite[Th\'{e}or\`{e}me~1.0.7]{ZQuasi}. This is artificial and convoluted; moreover we found Theorem~\ref{t:1or2} only afterwards, as a by-product of the key lemma.

Instead our proof \emph{simplifies} Zamour's.
It still uses some material from \cite{ZQuasi}, namely the following three facts which generalise their `Frobenius' counterparts by Nesin.

\begin{fact}[{induced structure: \cite[Lemme~4.2.1]{ZQuasi}, extending~\cite[Lemma~11.10]{BNGroups}}]\label{f:conjugacy}
Let $(C < G)$ be a definable, connected quasi-Frobenius pair of finite Morley rank. Let $H < G$ be definable and connected.
If $1 < H \cap C < C$ then $(H \cap C < H)$ is another definable, connected, quasi-Frobenius pair.
If $C_1, C_2$ are $G$-conjugates of $C$ with $H \cap C_i \neq 1$ then $C_1$ and $C_2$ are $H$-conjugate.
\end{fact}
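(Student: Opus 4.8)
The plan is to treat the two assertions in turn: the first (``$(H\cap C<H)$ is quasi-Frobenius'') is essentially formal bookkeeping, and the second (``conjugates of $C$ meeting $H$ are $H$-conjugate'') will come from a genericity sweep inside $H$ followed by the two-generic-subsets-meet principle.

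First I would set $D=H\cap C$ and dispose of the degenerate case $H\le C$ (then $D=H$ and there is nothing to prove), so that $D<H$ is a genuine pair. Definability of $D$ is immediate. For \textsc{ti}-ness of $D$ in $H$: for $h\in H$ one has $D^h=H\cap C^h$, so $D^h\cap D=H\cap C^h\cap C$; if this is nontrivial then $C^h\cap C\neq1$, so $C^h=C$ by \textsc{ti}-ness of $C$ in $G$, whence $D^h=D$. The same computation gives $N_H(D)=N_G(C)\cap H$, and since $C\le N_G(C)$ with $[N_G(C):C]<\infty$ while $(N_G(C)\cap H)\cap C=D$, the map $N_G(C)\cap H\to N_G(C)/C$ has kernel $D$ and finite image, so $[N_H(D):D]<\infty$: $D$ is quasi self-normalising in $H$. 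Connectedness of $D$ then follows by replaying the argument of the second Remark after the Definition with $D$ in place of $C$ and $H$ in place of $G$: if $D$ were disconnected, $N_H(D^\circ)=N_H(D)=N_H(D\setminus D^\circ)$ and a dimension count makes both $\bigcup_{h\in H}(D^\circ)^h$ and $\bigcup_{h\in H}(D\setminus D^\circ)^h$ generic in the connected group $H$, yet they are disjoint (across distinct conjugates by \textsc{ti}-ness) — a contradiction. Hence $(D<H)$ is a definable, connected, quasi-Frobenius pair.

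For the conjugacy statement, write $C_i=C^{g_i}$ and $D_i=H\cap C_i\neq1$ for $i=1,2$; by the above each $D_i$ is a definable connected nontrivial subgroup of $H$, \textsc{ti} in $H$, with $[N_H(D_i):D_i]<\infty$, so $\dim H\ge\dim D_i\ge1$. I would then check that the sweep $\bigcup_{h\in H}D_i^{\,h}=\bigcup_{h\in H}(H\cap C_i^{\,h})$ is generic in $H$: the map $(h,d)\mapsto d^h$ from $H\times D_i$ to $H$ has, over any nontrivial point, a fibre of dimension $\dim N_H(D_i)=\dim D_i$ (here \textsc{ti}-ness pins the point into a single $H$-conjugate of $D_i$), so its image has dimension $\dim H$. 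Deleting $1$ changes nothing since $\dim H\ge1$, so $\bigcup_{h}(D_1^{\,h}\setminus\{1\})$ and $\bigcup_{h}(D_2^{\,h}\setminus\{1\})$ are two generic subsets of the connected group $H$ and must meet: some $x\neq1$ lies in $C_1^{\,h_1}\cap C_2^{\,h_2}$ with $h_1,h_2\in H$. Then \textsc{ti}-ness of $C$ in $G$ forces $C_1^{\,h_1}=C_2^{\,h_2}$, i.e.\ $C_1^{\,h_1h_2^{-1}}=C_2$ with $h_1h_2^{-1}\in H$, which is the desired $H$-conjugacy.

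The hard part — insofar as anything here is hard — is making the generic sweep legitimate: one must know \emph{beforehand} that $[N_H(D_i):D_i]$ is finite (otherwise the fibre computation, hence the genericity, collapses), and that $\dim H\ge1$ so that removing the identity does not destroy genericity. Both hinge on the same mechanism, namely that \textsc{ti}-ness of $C$ \emph{in $G$} lets one push normalizer control down \emph{into $H$}; once that is in place, the rest is the two-generic-subsets-meet principle applied inside the connected group $H$.
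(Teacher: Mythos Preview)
The paper does not prove this statement: it is quoted as a Fact from \cite[Lemme~4.2.1]{ZQuasi} (extending \cite[Lemma~11.10]{BNGroups}) and used as a black box, so there is no in-paper proof to compare against. Your argument is correct and is precisely the standard one behind those citations: pull \textsc{ti}-ness and quasi self-normalisation of $C$ down to $D = H\cap C$ via $N_H(D) = N_G(C)\cap H$, then for the conjugacy clause sweep each $D_i$ over $H$, get two generic subsets of the connected group $H$, intersect, and let \textsc{ti}-ness of $C$ in $G$ finish.

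Two cosmetic remarks. First, your connectedness step (replaying the Remark) silently uses $D^\circ \neq 1$ to get $N_H(D^\circ) = N_H(D)$; this is the same tacit assumption the paper's own Remark makes, and is harmless here. Second, in the conjugacy clause you invoke ``by the above'' to get $D_i$ connected, but the first part had the extra hypothesis $H\cap C < C$; you should note separately that if $C_i \leq H$ then $D_i = C_i$ is already connected, while if $H \leq C_i$ then both $C_j$ meeting $H$ meet $C_i$ and equal it by \textsc{ti}. Neither affects the substance.
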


\begin{fact}[{non-simple splitting: \cite[Proposition~4.2.3]{ZQuasi}, extending~\cite[Lemma~11.21]{BNGroups}}]\label{f:nonsimple}
Let $(C < G)$ be a definable, connected quasi-Frobenius pair of finite Morley rank. Suppose that $C$ has an involution, and there is an infinite, definable, connected, normal $A \triangleleft G$ avoiding $C$. Then $G$ splits with an abelian kernel and complement $C$.
\end{fact}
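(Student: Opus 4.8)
The plan is to follow Nesin's argument for the Frobenius case, \cite[Lemma~11.21]{BNGroups}, the only real issue being to locate the abelian kernel. At the outset I would replace $A$ by a definable, connected, normal subgroup of $G$ avoiding $C$ which is \emph{maximal} as such: this exists by the descending chain condition and is infinite, as it contains the given $A$. By normality this $A$ avoids every conjugate of $C$, viz.~$A \cap C^g = (A\cap C)^g = 1$ for all $g$.

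First I would record the rigidity of the induced action. For $c \in C \setminus \{1\}$, \textsc{ti}-ness makes $C$ the unique conjugate of $C$ containing $c$; since $C_G(c)$ permutes the conjugates through $c$, it must fix that unique one, so $C_G(c) \leq N_G(C)$ and hence $C_G^\circ(c) \leq N_G^\circ(C) = C$ (here $C$ is connected and almost self-normalising). In particular $C_A^\circ(c) \leq A \cap C = 1$. Applying this to the involution $i \in C$ and invoking \cite[ex.~12 p.~78]{BNGroups}, the involution $i$ inverts $A$ and $A$ is abelian.

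The heart of the matter is then a short dimension count. Every conjugate $i^g$ inverts $A^g = A$, so $i^{-1}i^g$ centralises $A$; thus the injective conjugacy parametrisation places the class $i^G$ inside the single coset $i\,C_G(A)$, giving
\[
\dim G - \dim C_G(i) = \dim i^G \leq \dim C_G(A).
\]
Now $\dim C_G(i) = \dim C_G^\circ(i) \leq \dim C$ by the previous paragraph; and $C_G^\circ(A)$ is a definable, connected, normal subgroup of $G$ avoiding $C$ (were $c \in C\setminus\{1\}$ to centralise $A$, then $A = A^\circ \leq C_G^\circ(c) \leq C$, absurd) which contains the abelian group $A$, so by maximality $C_G^\circ(A) = A$ and $\dim C_G(A) = \dim A$. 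Hence $\dim G \leq \dim C + \dim A = \dim(AC)$, forcing the definable subgroup $AC$ of the connected group $G$ to be all of $G$; since $A \cap C = 1$, this is the desired splitting $G = A \rtimes C$ with abelian kernel $A$ and complement $C$.

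The step I expect to be delicate is the reduction to a maximal $A$ together with the identity $C_G^\circ(A) = A$: one must be sure the chosen $A$ is really the kernel and not merely contained in it. (If one dislikes this reduction, one can instead iterate $A \mapsto C_G^\circ(A)$, which stabilises by rank reasons at an abelian, self-centralising, normal subgroup, and run the same count there.) The genuinely non-routine ingredient is the coset trick $i^G \subseteq i\,C_G(A)$, which is precisely what makes the a priori unrelated quantities $\dim C_G(i)$, $\dim C_G(A)$ and $\dim G$ fit together.
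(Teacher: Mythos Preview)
The paper does not give its own proof of this statement: it is recorded as a Fact, with a citation to \cite[Proposition~4.2.3]{ZQuasi} (extending \cite[Lemma~11.21]{BNGroups}), and is used as a black box in \S\ref{S:CoBo}. So there is nothing in the paper to compare your argument against.

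Your argument is correct in substance. The rigidity step $C_G^\circ(c) \leq C$ for $c \in C\setminus\{1\}$, the abelianity of $A$ via the involution $i$, the coset inclusion $i^G \subseteq i\,C_G(A)$, and the resulting dimension count $\dim G \leq \dim A + \dim C$ all go through exactly as you say, and they do combine to give $G = A \rtimes C$. One slip: the existence of a \emph{maximal} definable, connected, normal subgroup avoiding $C$ is not a consequence of the \emph{descending} chain condition (which yields minimal objects) but of the \emph{ascending} chain condition on definable connected subgroups, which in finite Morley rank follows from the bound on ranks together with connectedness. Your own parenthetical fix---iterating $A \mapsto C_G^\circ(A)$ until it stabilises by rank---is the cleanest way to sidestep this, and already shows you noticed the issue.
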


\begin{fact}[{soluble analysis: \cite[Lemme~4.3.1]{ZQuasi}, extending~\cite[Theorem~11.32]{BNGroups}}]\label{f:soluble}
Let $(C < G)$ be a definable, connected quasi-Frobenius pair of finite Morley rank. Suppose that $G$ is soluble. Then $(C < G)$ is a Frobenius pair; moreover, $G$ splits as $G = G'\rtimes C$ and $C$ is abelian. Last, $\bigcup_{g \in G} C^g$ covers $G\setminus G'$.
\end{fact}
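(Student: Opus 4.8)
The plan is to establish all four assertions at once, by induction on $\dim G$, following Nesin's template for soluble Frobenius groups. Two facts frame the argument. On one hand, in a connected nilpotent group of finite Morley rank the normaliser of a proper definable connected subgroup always has strictly larger dimension; so $G$ is non-nilpotent, and since $G$ is connected soluble, $G'$ is definable, connected, nilpotent, with $1 < G' < G$ and $\dim G' < \dim G$, and $G' \le F(G)$. On the other hand $\dim N_G(C) = \dim C$ (the normaliser is finite over $C$), so the conjugates of $C$ form a $(\dim G - \dim C)$-dimensional family of pairwise trivially intersecting subgroups, whence $\Gamma = \bigcup_g C^g$ is generic and $\Delta = G \setminus \Gamma$ is not. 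The goal is to show that $\Delta \cup \{1\}$ equals $G'$, that $G$ splits over it with complement $C$, and to read off that $C$ is abelian and $N_G(C) = C$.

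The first step is $C \cap F(G) = 1$. If $F(G) \le C$ then, $F(G)$ being normal, $F(G) \le C \cap C^g$ for every $g$, forcing $C \triangleleft G$ and then $C = G$: absurd. If $C \le F(G)$ then $(C < F(G))$ is a quasi-Frobenius pair with nilpotent ambient group, contradicting the normaliser condition just recalled. Hence $C \cap F(G)$ is a proper subgroup of $C$; if it were also nontrivial, Fact~\ref{f:conjugacy} (with $H = F(G)$) would make $(C \cap F(G) < F(G))$ a quasi-Frobenius pair, again with nilpotent ambient: impossible. So $C \cap F(G) = 1$, and in particular $C \cap G' = 1$. The splitting is now quick: $CG'$ is normal in $G$ (as $G/G'$ is abelian) and contains the generic set $\Gamma$, so $CG' = G$; with $C \cap G' = 1$ this gives $G = G' \rtimes C$, hence $C \cong G/G'$ is abelian. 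Moreover the \textsc{ti} condition becomes a freeness statement: for $h \in G'$ a short coset computation (using $G' \triangleleft G$, $G' \cap C = 1$, so $G'c \cap C = \{c\}$) gives $C^h \cap C = C_C(h)$, and since $N_{G'}(C)$ is finite (it embeds in the finite group $N_G(C)/C$), every $h \in G' \setminus N_{G'}(C)$ is fixed by no nontrivial element of $C$. One also sees $\Gamma \cap G' = \{1\}$ — a bright element of $G'$ lies in some $C^g \cap (G')^g = (C \cap G')^g = 1$ — so $G' \setminus \{1\} \subseteq \Delta$.

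It remains to prove the reverse inclusion $G \setminus G' \subseteq \Gamma$ together with $N_G(C) = C$, and this is the real obstacle: the finite-Morley-rank, soluble incarnation of the classical statement that a Frobenius kernel is a subgroup. The plan here is to first obtain $N_{G'}(C) = 1$, equivalently $C_{G'}(C) = 1$, equivalently that $C$ acts freely on all of $G'$; the delicate point is eliminating a finite subgroup of $G'$ centralised by $C$, which one does by bootstrapping from the inductive conclusion applied to $F(G)C$ when that is proper, and arguing directly when $F(G)C = G$ — here exploiting that such a subgroup is torsion lying in $G' \le F(G)$, hence dark, whereas a toral element would be bright. Granting freeness, $h \mapsto h^{-1}chc^{-1}$ is an injective self-map of $G'$ for each $c \neq 1$, and $C_{G'}(y) = 1$ for every $y$ in the coset $G'c$ (immediate when $G'$ is abelian, in general by induction on the nilpotency class of $G'$); hence $c^G \subseteq G'c$ and $y^G \subseteq G'c$ both have dimension $\dim G' = \dim(G'c)$, so these two generic subsets of the irreducible coset $G'c$ meet, and $y$ is conjugate into $C$. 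Therefore $\Gamma = \{1\} \cup \bigcup_{c \neq 1} G'c = \{1\} \cup (G \setminus G')$, so $\Delta \cup \{1\} = G'$ is the asserted subgroup, $G = G' \rtimes C$, $N_G(C) = C$, and $\Gamma$ covers $G \setminus G'$. The hardest parts, as indicated, are the exact rather than merely generic identifications — ruling out residual finite phenomena and filling the cosets $G'c$ completely — which is exactly where the work of Zamour, extending Nesin, is concentrated.
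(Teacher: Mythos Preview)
The paper does not prove this statement: it is quoted as a \emph{Fact} with a citation to Zamour's thesis (extending Nesin's Frobenius theorem), and no argument is given in the paper itself. So there is no ``paper's own proof'' to compare your attempt against; what you have written is an outline of the Nesin--Zamour argument, not a comparison target.

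That said, your outline has one concrete gap worth flagging. In a connected soluble group of finite Morley rank one always has $G' \le F(G)$ (since $G/F^\circ(G)$ is abelian); combined with your earlier splitting $G = G' \rtimes C$ this forces $F(G)C \supseteq G'C = G$. Hence your case ``$F(G)C$ proper'', to which you hoped to apply the inductive hypothesis on $\dim G$, \emph{never occurs}: you are always in the ``direct'' case $F(G)C = G$, where your argument for $C_{G'}(C) = 1$ is only a hint (``torsion in $F(G)$, hence dark, whereas a toral element would be bright''). This is not a proof: a finite-order element of a connected nilpotent group need not be toral, so darkness of $G'\setminus\{1\}$ alone does not kill a putative nontrivial element of $C_{G'}(C)$. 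Eliminating this finite centraliser is precisely the delicate step in the cited references, and your sketch does not supply it.

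The remainder of your plan (splitting, abelianity of $C$, the coset-filling argument via genericity of $c^{G'}$ and $y^{G'}$ in $G'c$ once freeness is known) is sound, and you correctly identify where the real work lies. But as written, the proposal is a roadmap to the known proof rather than an independent one, and the one place where you try to deviate --- the $F(G)C$ dichotomy --- collapses.
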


The proof of Theorem~\ref{t:CoBo} starts here.

\begin{notation*}\leavevmode
\begin{itemize}
\item
Let $(C < G)$ be a definable, connected, quasi-Frobenius pair of finite Morley rank. Suppose that $G$ is $U_2^\perp$ with involutions.
\item
Suppose that $C$ is soluble but $G$ is not.
\item
Fix a Borel subgroup $B \geq C$.
\item
Suppose $B > C$. %(The assumption is used only from Proposition~\ref{p:intersectioncontrol} on.)
This implies at once that $B$ is not abelian.
\item
Let $i$ be an involution.
Following Bender, for $k \in i^G$ let $T_k = \{b \in B: b^k = b^{-1}\}$.
(Here $B$ remains implicit in the notation.)
\end{itemize}
\end{notation*}

\begin{proposition}[{cf.~\cite[Lemme~4.6.3]{ZQuasi}}]\label{p:genericinvolutions}\leavevmode
% Suppose $G$ is not soluble, and let $B \geq C$ be definable, connected, soluble.
%$i^G \setminus N_G(B)$ is generic in $i^G$, and so is 
\begin{enumerate}[label=(\roman*), series=claims]
\item\label{i:generick}
$\{k \in i^G\setminus N_G(B): \dim T_k \geq \dim B - \dim C_G(i)\}$
is generic in $i^G$.
\end{enumerate}
\end{proposition}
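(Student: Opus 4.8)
The plan is Bender's translation trick, followed by a short rank count on the coset map $i^G \to G/B$. First I would record the elementary identity behind the definition of $T_k$: since $k^2 = 1$, for $b \in B$ one has $b^k = b^{-1}$ if and only if $(kb)^2 = 1$, i.e.\ $kb$ is an involution of $G$ (or trivial, which can occur only when $k \in B$). Thus left translation by $k$ carries $T_k$, up to at most one point, onto the set of involutions lying in the coset $kB$; in particular $i^G \cap kB \subseteq kT_k$, and hence $\dim(i^G \cap kB) \le \dim T_k$ for every $k \in i^G$.

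Next comes the count. Consider the definable map $f \colon i^G \to G/B$, $k \mapsto kB$ (the coset space, of dimension $\dim G - \dim B$); its fibre over $kB$ is exactly $i^G \cap kB$, of dimension $\le \dim T_k$ by the previous step. Since $i^G$ is connected --- being the conjugation-image of the connected group $G$, equivalently $G/C_G(i)$ --- the fibre dimension of $f$ takes a unique generic value $m^*$ (uniqueness because $i^G$ has Morley degree one), attained on a generic subset $X^* \subseteq i^G$ that is a union of fibres of $f$; as all fibres of $f|_{X^*}$ over $f(X^*)$ have dimension $m^*$,
\[
\dim i^G \;=\; \dim f(X^*) + m^* \;\le\; \dim(G/B) + m^* \;=\; \dim G - \dim B + m^* .
\]
Comparing with $\dim i^G = \dim G - \dim C_G(i)$ gives $m^* \ge \dim B - \dim C_G(i)$, so that for every $k \in X^*$ we get $\dim T_k \ge \dim(i^G \cap kB) = m^* \ge \dim B - \dim C_G(i)$. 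Hence $\{k \in i^G : \dim T_k \ge \dim B - \dim C_G(i)\}$ contains the generic set $X^*$ and is generic in $i^G$.

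It then remains to see that intersecting with $i^G \setminus N_G(B)$ does not destroy genericity, i.e.\ that $i^G \cap N_G(B)$ is not generic in $i^G$ --- and this innocuous-looking step is the one needing real care. Here $N_G(B)$ is a proper subgroup with $N_G(B)^\circ = B$ (a Borel is proper since $G$ is not soluble), so it suffices to bound the involutions in each of the finitely many $B$-cosets inside $N_G(B)$; for the coset $B$ itself I would invoke Fact~\ref{f:soluble}, namely that the soluble quasi-Frobenius pair $(C < B)$ splits as $B = B' \rtimes C$ with $C$ abelian and $\bigcup_{g \in B} C^g \supseteq B \setminus B'$, while an involution of $B'$ would be central in $B$ and hence lie in $C \cap B' = 1$; so every involution of $B$ lies in a conjugate of $C$, and a short count gives dimension $\dim B - \dim C$ for these. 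The remaining cosets, if any --- a Borel may well be self-normalising, as in $\PGL_2$ --- are handled by the same ideas. I would expect this confinement of the involutions normalising $B$, rather than the rank arithmetic of the main count, to be the only step of the argument that is not essentially mechanical.
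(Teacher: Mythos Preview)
Your Bender translation and coset-fibre count are correct and are exactly what the paper abbreviates as ``the Bender computation \cite[Proposition~2]{DJInvolutive}''. The divergence is entirely in the final step---showing that $i^G \cap N_G(B)$ is non-generic in $i^G$---and here your approach is both different from the paper's and incomplete.

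The paper does not count involutions inside $N_G(B)$ at all. It argues structurally: if $i^G \cap N_G(B)$ were generic in $i^G$, then so is each $i^G \cap N_G(B)^g$; by the \textsc{dcc} finitely many conjugates already realise the core $L = \bigcap_{g \in G} N_G(B)^g$, and since $\deg i^G = 1$ the intersection of the corresponding generic subsets of $i^G$ is still generic and sits inside $L$. Hence $K = L^\circ$ is an infinite connected \emph{normal} subgroup of $G$, soluble because $N_G^\circ(B)$ is. Then either $K \cap C = 1$ and Fact~\ref{f:nonsimple} splits $G$ as (soluble)$\rtimes C$, or $K \cap C \neq 1$ and a Frattini argument via Fact~\ref{f:conjugacy} gives $G = K \cdot C$; either way $G$ is soluble, a contradiction.

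Your local count runs into two problems. First, for the coset $B$ itself, the assertion ``an involution of $B'$ would be central in $B$'' is unjustified; what is true and sufficient is that $(C < B)$ is genuinely Frobenius, so $C_{B'}(i)$ is finite and $B'$ carries only finitely many involutions. Second, and more seriously, the hand-wave ``the remaining cosets\dots are handled by the same ideas'' does not close. Involutions in a coset $kB \subseteq N_G(B)\setminus B$ are exactly $kT_k$, and for $k$ \emph{normalising} $B$ there is no Frobenius structure forcing $T_k$ to be small; the crude bound $\dim T_k \le \dim B$ only yields $\dim(i^G \cap N_G(B)) \le \dim B$, which need not beat $\dim i^G = \dim G - \dim C_G(i)$ since one may well have $\dim B + \dim C = \dim G$. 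So unless you separately establish $N_G(B) = B$ (which the paper proves only much later), the coset count stalls. Note also that your claim $N_G(B)^\circ = B$ is not free: it amounts to solubility of $N_G^\circ(B)$, which the paper explicitly flags as a hypothesis here and proves only in step~\ref{i:NB'=B}.
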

\begin{remarks*}\leavevmode
\begin{itemize}
\item
One needs only solubility of $N_G^\circ(B)$ so far. The inclusion $C < B$ is used from~\ref{i:B'dark} on, and the extra clause that $B$ is a Borel subgroup from~\ref{i:NB'=B} on.
\item
Actually solubility of $N_G^\circ(B)$ can be \emph{proved} using $C < B$ and solubility of $B$. This is done in~\ref{i:NB'=B}.
\item
The case where $C = C_G^\circ(i)$ is a Borel subgroup yields no information since $T_k$-sets are finite \cite{CJTame, DGroupes2, DJInvolutive}.
\end{itemize}
\end{remarks*}

\begin{proof}\leavevmode
\begin{enumerate}[label={\itshape (\roman*)}, series=proofs]
\item
As a contradiction using $\deg i^G = 1$, suppose $i^G \cap N_G(B)$ generic in $i^G$. For $g \in G$ let $X_g = i^G \cap N_G(B^g) = (i^G \cap N_G(B))^g$, a generic subset of $i^G$. By the \textsc{dcc}, there are $g_1, \dots, g_n$ with:
\[L = \bigcap_{g \in G} N_G(B)^g = \bigcap_{r = 1}^n N_G(B)^{g_r}.\]
Now $Y = X_{g_1} \cap \dots \cap X_{g_n} \subseteq L$, and $Y$ is generic in $i^G$. So $K = L^\circ$ is a non-trivial, definable, connected, normal subgroup. By solubility of $N_G^\circ(B)$, $K$ is soluble.

If $K \cap C = 1$, then the configuration splits by non-simplicity (Fact~\ref{f:nonsimple}). Since $K$ and $C$ are soluble, so is $G$: a contradiction. So $K \cap C \neq 1$ and $(K\cap C < K)$ is a quasi-Frobenius pair by Fact~\ref{f:conjugacy}. But $K \trianglelefteq G$. By a Frattini argument and conjugacy in the induced structure (Fact~\ref{f:conjugacy}), $G = K\cdot N_G(C) = K \cdot N_G^\circ(C) = K \cdot C$. Since both are soluble, so is $G$: a contradiction. So $i^G \setminus N_G(B)$ is generic in $i^G$.

The rest is the Bender computation \cite[Proposition~2]{DJInvolutive}, spreading $i^G$ in translates of $B$.
\qedhere
\end{enumerate}
\end{proof}

% \subsection{The `strong' assumption}

% Zamour's `strongly standard' assumption can be phrased in various ways.
% We prefer:
% \begin{quote}
% $C < B$ for some Borel subgroup (which implies solubility of $C$).
% \end{quote}
%Although Zamour aims at proving that a weaker condition implies the strong one, we are not convinced.

% From now on we assume this and non-solubility of $G$; we shall retrieve $\PGL_2(\bK)$.

Since $(C < B)$ is a quasi-Frobenius pair by Fact~\ref{f:conjugacy}, it bears its own notion of darkness. \ref{i:B'dark} says it is induced by that of $(C < G)$.

\begin{proposition}\label{p:intersectioncontrol}\leavevmode
% Suppose $G$ is not soluble, but $C$ is \emph{properly} contained in some Borel group $B > C$.
% Then:
\begin{enumerate}[resume*=claims]
\item\label{i:B'dark}
$B' \neq 1$ is the set of $G$-dark elements of $B$;
\item\label{i:NB'=B}
$N_G^\circ(B') = B$;
\item\label{i:preBruhat}
for $g \notin N_G(B)$ one has $B' \cap B^g = 1$. In particular, $B \cap B^g$ is abelian.
\end{enumerate}
\end{proposition}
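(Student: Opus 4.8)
\textbf{Proof plan for Proposition~\ref{p:intersectioncontrol}.}

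The plan is to prove the three clauses in order, each feeding into the next. For~\ref{i:B'dark}: since $B$ is not abelian, $B' \neq 1$. The pair $(C < B)$ is quasi-Frobenius by Fact~\ref{f:conjugacy}, so by the soluble analysis (Fact~\ref{f:soluble}) applied to $B$, one has $B = B' \rtimes C$ with $C$ abelian, and $\bigcup_{g \in B} C^g$ covers $B \setminus B'$; hence the $B$-dark elements of $B$ are exactly those of $B'$. It then remains to identify $B$-darkness with $G$-darkness inside $B$. The nontrivial direction is that a $B$-bright element of $B$ is $G$-bright, which is immediate since a $B$-conjugate of $C$ is a $G$-conjugate of $C$; conversely, if $b \in B$ lies in some $C^g$, then $C^g \cap B \neq 1$, so by the conjugacy clause of Fact~\ref{f:conjugacy} (applied with $H = B$, using that $C^g$ and $C$ both meet $B$ nontrivially — here one needs $b \neq 1$, and the case $C \cap B^{g^{-1}}$ being all of $C$ versus a proper nontrivial intersection is handled by \textsc{ti}-ness of $C$) the subgroup $C^g$ is $B$-conjugate to $C$, so $b$ is $B$-bright. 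Thus the two notions of darkness agree on $B$, giving~\ref{i:B'dark}.

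For~\ref{i:NB'=B}: clearly $B \leq N_G^\circ(B')$ since $B' \trianglelefteq B$. For the reverse inclusion, set $M = N_G^\circ(B')$, a definable connected group containing $B$; since $B$ is a Borel subgroup, it suffices to show $M$ is soluble, for then $M = B$ by maximality. Here I would invoke the remark following Proposition~\ref{p:genericinvolutions}: solubility of $N_G^\circ(B)$ can be proved using $C < B$ and solubility of $B$. The idea is that $M$ normalises $B'$, hence acts on the quasi-Frobenius pair $(C' < B)$ structure; more directly, $B'$ being a nontrivial normal subgroup of the soluble $B$, and $M$ normalising it, one analyses $M/C_M^\circ(B')$ and $C_M^\circ(B')$. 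I expect the cleanest route is: $B = B' \rtimes C$ with $C$ abelian acting on $B'$, and if $M > B$ were insoluble we would contradict either maximality of $B$ as a soluble subgroup or the quasi-Frobenius structure via a Frattini-type argument as in Proposition~\ref{p:genericinvolutions}\ref{i:generick}. This step — establishing solubility of $N_G^\circ(B')$, equivalently $N_G^\circ(B')=B$ — is the main obstacle, and is exactly the point flagged in the remark as a proof that $N_G^\circ(B)$ is soluble.

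For~\ref{i:preBruhat}: let $g \notin N_G(B)$ and suppose $b \in B' \cap B^g$ with $b \neq 1$. By~\ref{i:B'dark}, $b$ is $G$-dark. Now the Key Lemma is the tool: $b$ is dark, and since $G$ is $U_2^\perp$ with involutions one produces an involution inverting $b$ — either directly, or because $b \in B'$ and in the dihedral/soluble Frobenius structure of $B$ the element $b$ is inverted by a conjugate of $i$ (the $T_k$-sets of Proposition~\ref{p:genericinvolutions}); strong reality of dark elements, guaranteed by \cite[Theorem~A]{DWGeometry}, also applies. Let $i$ be such an involution. The Key Lemma gives that $A = C_G^\circ(b)$ is dark, abelian, and $C_G^\circ(b) = C_G^\circ(a)$ for every $a \in A \setminus \{1\}$. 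But $b \in B^g$ forces $B' {}^g{}^{-1}$... more simply, $b$ lies in the nonabelian Borel $B^g$, so $C_{B^g}^\circ(b) \geq Z(B^g)$-type considerations clash with $C_G^\circ(b)$ being both abelian and, by~\ref{i:B'dark} applied inside $B^g$, forcing $b$ to lie in $(B^g)'$ as well; tracking $C_G^\circ(b)$ through both $B$ and $B^g$ via the rigidity clause of the Key Lemma yields $B = C_G^\circ(C_G^\circ(b))$-type equality, hence $B = B^g$, contradicting $g \notin N_G(B)$. Once $B' \cap B^g = 1$ is established, $B \cap B^g$ embeds into $B/B' \simeq C$ (via~\ref{i:B'dark} and Fact~\ref{f:soluble}), which is abelian, so $B \cap B^g$ is abelian, completing~\ref{i:preBruhat}.

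Of the three, I expect~\ref{i:B'dark} to be routine bookkeeping with Facts~\ref{f:conjugacy} and~\ref{f:soluble}, \ref{i:preBruhat} to be a clean application of the Key Lemma plus the structure $B/B' \simeq C$, and~\ref{i:NB'=B} — the solubility of $N_G^\circ(B')$ — to be the crux, since it is precisely what replaces the unipotence/Bender-analysis machinery in Zamour's approach and is the step whose omission is noted in \cite[Th\'{e}or\`{e}me~4.5.5]{ZQuasi}.
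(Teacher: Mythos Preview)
Your plan for~\ref{i:B'dark} is fine and matches the paper's route via Facts~\ref{f:conjugacy} and~\ref{f:soluble}. But you omit one byproduct the paper establishes here and uses later: taking an involution $i \in C$, one has $C_{B'}^\circ(i) \leq C \cap B' = 1$, so $i$ inverts $B'$ and $B'$ is abelian. You will need both facts in~\ref{i:preBruhat}.

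For~\ref{i:NB'=B} you have the wrong diagnosis. You call it ``the crux'' and reach for Frattini arguments and quotient analysis, but the paper's proof is three lines and uses a tool you never mention: non-simple splitting (Fact~\ref{f:nonsimple}). With $H = N_G^\circ(B')$, the pair $(C < H)$ is quasi-Frobenius, $C$ contains an involution, and $B' \trianglelefteq H$ is infinite, definable, connected, with $B' \cap C = 1$. Fact~\ref{f:nonsimple} then gives $H = (\text{abelian kernel}) \rtimes C$, so $H$ is soluble, hence $H = B$ by maximality. (Incidentally, the remark that \cite[Th\'{e}or\`{e}me~4.5.5]{ZQuasi} ``misses'' something refers to the Key Lemma, not to this step.)

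For~\ref{i:preBruhat} your outline is in the right spirit but has a genuine gap and some vagueness. The gap: you invoke \cite[Theorem~A]{DWGeometry} to say dark elements are strongly real. That theorem only gives \emph{existence} of strongly real dark elements; it does not say every dark element is inverted by an involution. The correct source of strong reality is exactly the byproduct above: $i$ inverts all of $B'$, so any $d \in B' \setminus \{1\}$ is strongly real and the Key Lemma applies. The vagueness: your ``$B = C_G^\circ(C_G^\circ(b))$-type equality'' is not how the argument runs. The paper does this: $d \in B' \cap B^g$ is dark, so by~\ref{i:B'dark} applied to $B^g$ one has $d \in (B^g)'$; abelianity of $B'$ and $(B^g)'$ gives $B', (B^g)' \leq C_G^\circ(d)$, which is abelian by the Key Lemma; hence $(B^g)' \leq C_G^\circ(B')$. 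Now $C_G^\circ(B') \leq N_G^\circ(B') = B$ by~\ref{i:NB'=B}, and inside $B$ the Key Lemma forces $C_B^\circ(B') \subseteq \Delta \cap B = B'$. So $(B^g)' \leq B'$, whence $(B^g)' = B'$ and $B^g = N_G^\circ((B^g)') = N_G^\circ(B') = B$, a contradiction. Your final sentence about $B \cap B^g$ embedding in $B/B' \simeq C$ is correct.
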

\begin{proof}\leavevmode
\begin{enumerate}[resume*=proofs]
\item
Recall $B' \neq 1$ since otherwise $B \leq N_G^\circ(C) = C$.
By soluble splitting (Fact~\ref{f:soluble}), $B = B'\rtimes C$ and $C$ is abelian. For $i$ an involution in $C$, one gets $C_{B'}^\circ(i) = 1$ so $i$ inverts $B'$. Hence $B' = [B, i]$ is abelian.
By Fact~\ref{f:soluble}, $B \cap \Delta \subseteq B'$. But conversely if $x \in B'\cap C^g \setminus\{1\}$ for some $G$-conjugate of $C$, then $B' \leq C_G^\circ(x) \leq C^g$. Therefore $(B\cap C^g < B)$ is a quasi-Frobenius pair, so $B\cap C^g$ is a $B$-conjugate of $C$ by Fact~\ref{f:conjugacy}. Thus $C^g \leq B$, and $C^g$ is a $B$-conjugate of $C$. Now $x \in B'\cap C^g$ is a contradiction. This proves $B' \subseteq \Delta$.

%We deduce~\ref{i:NB'=B}.
\item
Let $H = N_G^\circ(B') \geq B$. Since $C < B \leq H$, we find that $(C < H)$ is a quasi-Frobenius pair by Fact~\ref{f:conjugacy}. But $1 < B' \triangleleft H$ so by non-simple splitting (Fact~\ref{f:nonsimple}), $H$ splits with an abelian kernel and $C$ as a complement. Hence $H$ is soluble (this does not use~\ref{i:generick} but implies solubility of $N_G^\circ(B) \leq H$). Finally using the definition of a Borel subgroup, $H = B$.
\item
%We finally prove~\ref{i:preBruhat}.
Keep~\ref{i:B'dark} in mind.
Let $d \in (B' \cap B^g)\setminus\{1\}$. Then $d \in B'$ is dark, so $d \in B^g \cap \Delta = (B^g)'$. Then $B', (B^g)' \leq C_G^\circ(d)$ which is abelian by the key lemma; now $(B^g)'\leq C_G^\circ(B')$. But by~\ref{i:NB'=B}, the key lemma and the inner structure of $B$ one has $C_G^\circ(B') \leq C_B^\circ(B') \subseteq \Delta \cap N_G^\circ(B') \subseteq \Delta \cap B = B'$, and therefore $(B^g)' = B'$. Take connected normalisers and apply~\ref{i:NB'=B} again to find $B^g = B$, a contradiction. Therefore $(B \cap B^g)' \leq B' \cap B^g = 1$, and $B \cap B^g$ is abelian.
\qedhere
\end{enumerate}
\end{proof}

So easily obtained an intersection control emphasises the strength of the `quasi-Frobenius' assumption% combined with non-maximality of $C$
: it entirely bypasses the local analysis of \cite{DJInvolutive}.

\begin{proposition}[{cf.~\cite[Lemme~4.6.5]{ZQuasi}}]\leavevmode
% Suppose $G$ is not soluble, but $C$ is \emph{properly} contained in some Borel group $B > C$.
% Then:
\begin{enumerate}[resume*=claims]
\item\label{i:Tk=C}
For $k$ generic in $i^G$, the set $T_k$ equals $(B\cap B^k)^\circ$ and is a conjugate of $C$;
\item\label{i:dihedral}
the configuration is dihedral.
\end{enumerate}
\end{proposition}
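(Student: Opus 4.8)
The plan is to exploit the Bender-style computation recorded in claim~\ref{i:generick}, combined with the intersection control from Proposition~\ref{p:intersectioncontrol}, to pin down the sets $T_k$ and then read off the dihedral alternative of Fact~\ref{f:DW}. First I would recall that for generic $k \in i^G$ one has $k \notin N_G(B)$ and $\dim T_k \geq \dim B - \dim C_G(i)$. The standard Bender fan argument spreading $i^G$ through translates of $B$ forces this to be an equality, and more: $T_k$ is (generically) a coset-like object, in fact $T_k = (B \cap B^k)^\circ$. Indeed $b \in T_k$ means $b^k = b^{-1}$, i.e.\ $b$ is inverted by $k$; writing $d = bk \cdot k = \dots$, one sees $b$ lies in $B \cap B^k$ since $k$ normalises neither but inverts $b$. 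So the first thing to establish is $T_k \subseteq (B\cap B^k)^\circ$ up to the finite part, with the reverse dimension inequality coming from genericity of $k$ and a counting of the map $(k,b) \mapsto bk$ onto dark strongly real elements.

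Next, by claim~\ref{i:preBruhat}, for $k \notin N_G(B)$ the intersection $B \cap B^k$ is abelian, and $B' \cap B^k = 1$. So $T_k \leq (B\cap B^k)^\circ$ is an abelian, definable, connected subgroup of $B$ which, being inverted by the involution $k$, satisfies $C_{T_k}^\circ(k) = 1$. Now I would argue that a connected abelian subgroup of $B$ inverted by an involution and meeting $B'$ trivially must be a conjugate of $C$: the key point is that $B = B' \rtimes C$ with $C$ abelian (Fact~\ref{f:soluble}), so a complement-like abelian piece avoiding the kernel $B'$ is a $B$-conjugate of the complement $C$ — this is exactly the kind of statement Fact~\ref{f:conjugacy} and the splitting deliver. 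The dimension bookkeeping $\dim T_k = \dim B - \dim C_G(i)$ should match $\dim C$, since $C = C_G^\circ(i')$ for a suitable involution $i'$ and $\dim B = \dim B' + \dim C$ with $\dim B' = \dim C_G(i)$-complement; getting these dimension identities to line up cleanly is where I'd be most careful. This gives claim~\ref{i:Tk=C}.

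For claim~\ref{i:dihedral}: once $T_k$ is a conjugate of $C$ for generic $k$, we know $B$ contains at least two distinct conjugates of $C$ (namely $C$ itself and $T_k$ for suitable $k$), whose intersection is trivial by \textsc{ti}-ness. But $C$ has an involution $i'$ with $C = C_G^\circ(i')$, so $B$ contains the involution $i'$; and $k$ inverts $T_k = C^h$, so $k$ normalises $N_G(C^h)$, forcing a second involution into play inside $N_B(C^h)$. If we were in the strongly embedded case, $[N_G(C):C]$ would be odd and all these involutions would be forced to lie in a single conjugate of $C$, contradicting the \textsc{ti} intersection just exhibited (compare the argument in the Key Lemma's proof, where $C_i \neq C_j$ is derived). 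Hence the dichotomy of Fact~\ref{f:DW} lands on the dihedral side.

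The main obstacle I anticipate is the identification step in claim~\ref{i:Tk=C}: proving that $(B \cap B^k)^\circ$, an abelian connected subgroup of $B$ inverted by an involution, is \emph{exactly} a conjugate of $C$ and not merely contained in one. This needs the dimension equality (to rule out it being smaller) together with the complement-conjugacy coming from $B = B' \rtimes C$ and Fact~\ref{f:conjugacy} (to rule out it being a non-conjugate abelian subgroup). Everything else is bookkeeping on ranks and a reprise of the involution-counting already used in the Key Lemma.
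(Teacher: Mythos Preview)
Your plan has the right skeleton for \ref{i:Tk=C}---use $k\notin N_G(B)$ from \ref{i:generick}, the abelianity of $B\cap B^k$ from \ref{i:preBruhat}, and the fact that $T_k$ avoids $B'$---but the step where you upgrade the \emph{inclusion} $T_k\leq$ (a conjugate of) $C$ to an \emph{equality} does not close. You propose to match dimensions: from \ref{i:generick} you have $\dim T_k\geq\dim B-\dim C_G(i)$, and from $T_k\cap B'=1$ you get $\dim T_k\leq\dim C$; combining gives $\dim B'\leq\dim C_G(i)$, but \emph{not} the reverse inequality. Nothing in the hypotheses forces $\dim B'=\dim C_G(i)$, and your remark that ``the Bender fan argument forces equality'' is not correct---\ref{i:generick} only produces the lower bound. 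Likewise, an abelian connected subgroup of $B=B'\rtimes C$ meeting $B'$ trivially is not automatically a $B$-conjugate of $C$: it is only a complement once you already know it has full dimension $\dim C$, which is precisely what is in question.

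The paper avoids dimension-matching entirely via a sandwich. First one shows $T_k$ is bright (your route via $T_k\cap B'=1$ and \ref{i:B'dark} is a valid shortcut for this; the paper uses the Key Lemma directly). Brightness plus abelianity of $T_k$ forces $T_k$ into a \emph{single} conjugate, say $T_k\leq C$. Since $T_k$ is nontrivial and $k$-stable, $k$ normalises $C$; hence $C=C^k\leq B\cap B^k$, giving $C\leq(B\cap B^k)^\circ$. Separately, one shows $C^\circ_{B\cap B^k}(k)\leq B\cap C_k=1$ (where $C_k$ is the conjugate of $C$ containing $k$), so $k$ inverts $(B\cap B^k)^\circ$, i.e.\ $(B\cap B^k)^\circ\leq T_k$. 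The chain $T_k\leq C\leq(B\cap B^k)^\circ\leq T_k$ then collapses. This is the missing idea in your plan: instead of counting, one uses $k\in N_G(C)$ to pull the whole of $C$ into $B\cap B^k$, and a centraliser computation to push $(B\cap B^k)^\circ$ back into $T_k$.

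For \ref{i:dihedral}, your argument is more elaborate than needed. Once $T_k=C$ and $k$ inverts it, one has $k\in N_G(C)\setminus C$ (the latter since $k\notin N_G(B)\supseteq C$); if $[N_G(C):C]$ were odd the involution $k$ would lie in $C$, so Fact~\ref{f:DW} forces the dihedral case immediately.
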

\begin{proof}\leavevmode
\begin{enumerate}[resume*=proofs]
\item
Fix generic $k$. By~\ref{i:generick}, $k$ does not normalise $B$ but $\dim T_k \geq \dim B - \dim C_G(i)$.
By~\ref{i:preBruhat}, $B \cap B^k$ is abelian. So $T_k$ is an abelian group and $T_k \leq B\cap B^k$.

Suppose that $T_k$ contains a dark element $d$; then $d \in B'$ by~\ref{i:B'dark}, and $d$ is strongly real inverted by $k$. By the key lemma, $k$ inverts $C_G^\circ(d) \geq B'$, so $k$ normalises $B'$. Hence $k \in N_G(N_G^\circ(B')) = N_G(B)$ by~\ref{i:NB'=B}, a contradiction. So $T_k$ consists only of bright elements.
Now if $s, t \in T_k$ and $t \in C$, then by abelianity $C^s = C$ and $s \in N_B(C) = C$ by Fact~\ref{f:soluble}. So all elements are in the \emph{same} conjugate; we may assume $T_k \leq C$.

But then $T_k = T_k^k \leq C^k$, so $C = C^k \leq B \cap B^k$.
We conclude. Let $C_k$ be the conjugate of $C$ containing $k$. If $X = B\cap C_k \neq 1$, then both $(X < B)$ and $(C < B)$ are quasi-Frobenius pairs. By conjugacy in the induced structure (Fact~\ref{f:conjugacy}), we find $\dim X = \dim C$, so $C_k \leq B$ and $k \in C_k \leq B$, a contradiction.
Therefore $X = 1$ and $C_{B\cap B^k}^\circ(k) \leq B \cap C_k = 1$, so 
$k$ inverts $(B\cap B^k)^\circ$. This means $(B\cap B^k)^\circ \leq T_k$.
Summing up,
\[T_k \leq C \leq (B\cap B^k)^\circ \leq T_k,\]
as wanted.
%which proves~\ref{i:Tk=C}. Now $k \in N_G(C) \setminus C$: the configuration is dihedral, \ref{i:dihedral}.
\item
Now $k \in N_G(C) \setminus C$: the configuration is dihedral by Fact~\ref{f:DW}.
\qedhere
\end{enumerate}
\end{proof}

\begin{remarks*}\leavevmode
\begin{itemize}
\item
\ref{i:dihedral} is the reason why Zamour's assumption on $[N:C]$ is unnecessary.
\item
Mind the connected component when proving that $k$ inverts $(B\cap B^k)^\circ$. One may not claim $C_G(k) \leq C_k$, as it is false in the dihedral case. 
\item
One could prove $T_k = B\cap B^k$, but this is not used.
\end{itemize}
\end{remarks*}

\begin{proposition}\leavevmode
\begin{enumerate}[resume*=claims]
\item
% Suppose $G$ is not soluble, but $C$ is \emph{properly} contained in some Borel group $B > C$.
% Then 
$G \simeq \PGL_2(\bK)$.
\end{enumerate}
\end{proposition}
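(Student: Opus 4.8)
The plan is to feed the structural information accumulated in Propositions~\ref{p:genericinvolutions}--\ref{p:intersectioncontrol} and~\ref{i:Tk=C}--\ref{i:dihedral} into a known recognition theorem for $\PGL_2(\bK)$. Since the configuration is now dihedral, Fact~\ref{f:DW} already grants us a great deal: $[N_G(C):C] = 2$, $C$ is abelian, $N_G(C)\setminus C$ consists of involutions inverting $C$, and the Pr\"ufer $2$-rank is $1$. Combined with~\ref{i:Tk=C}, which says that for generic $k \in i^G$ the intersection $B \cap B^k$ has connected component a conjugate of $C$, and with~\ref{i:preBruhat}, which controls all intersections $B \cap B^g$, we have essentially exhibited a split $BN$-pair of rank $1$ (a Moufang-like splitting) with torus $C$ and Weyl group of order $2$ generated by the class of $k$.

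First I would make the Bruhat-type decomposition explicit: using~\ref{i:generick} the generic involution $k$ lies in $N_G(C)\setminus C$ up to the conjugation sorting out in the proof of~\ref{i:Tk=C}, and $B \cap B^k = C$ (connected, by~\ref{i:Tk=C}), so $\dim G = \dim B + \dim(B/ (B\cap B^k)) = 2\dim B - \dim C = \dim B + \dim B'$ after using $B = B' \rtimes C$. Next I would pin down $\dim B' = \dim C$: the involution $i \in C$ inverts $B'$ (shown in~\ref{i:B'dark}) and also inverts $C$; so $B'$ and $C$ are both "negative" and "positive" parts relative to $i$ acting on a rank-$1$ situation, and the $T_k$-computation of~\ref{i:generick}, namely $\dim T_k \geq \dim B - \dim C_G(i) = \dim B - \dim C$, together with $T_k = C$, forces $\dim B = 2\dim C$, i.e. $\dim B' = \dim C$ and $\dim G = 3\dim C$. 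At this point one is in exactly the configuration of a connected, $U_2^\perp$ group with involutions, Pr\"ufer $2$-rank $1$, a Borel $B$ with $\dim G = 3\dim B'$, $B' = U$ a self-normalising-up-to-$B$ "unipotent radical", and a split $BN$-pair of rank $1$; this is precisely the hypothesis of the classical identification of $\PGL_2$, so I would cite the relevant recognition theorem (the one used for CiBo$_1$ elimination / the Cherlin--Jaligot style $N_\circ^\circ$ analysis, or more directly a split-$BN$-pair-of-rank-$1$ theorem), whose conclusion is $G \simeq \PGL_2(\bK)$.

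Concretely, the cleanest route is: (a) show $B \cap B^k = C$ is a common complement and $N_G(C) = C \rtimes \langle k \rangle$ is the full normaliser $N$; (b) verify $G = B \cup B k B$ by a dimension count ($\dim BkB = \dim B + \dim B - \dim C = \dim G$ so $BkB$ is generic, hence cogeneric with $B$, and $B \cup BkB$ is then seen to be all of $G$ by the standard trick for rank-$1$ $BN$-pairs, using that two double cosets partition $G$); (c) conclude that $(B, N)$ is a split $BN$-pair of rank $1$ with abelian unipotent radical $B'$ inverted by an involution, and invoke the classification. Alternatively, and perhaps more in the spirit of the paper, one identifies $G$ via its action on the coset space $G/B$, which is a rank-$1$ "projective line" on which $G$ acts sharply $2$-transitively on a suitable subset, and applies the Hrushovski--style recognition of $\PGL_2$ acting on $\mathbb{P}^1$.

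The main obstacle I expect is step (b)--(c): turning the purely generic information ("for generic $k$, $T_k = C$ and $B \cap B^k = C$") into the exact combinatorial $BN$-pair axioms valid for \emph{all} nontrivial double cosets, i.e. ruling out extra double cosets and showing the Weyl group is exactly of order $2$. This requires either a careful covering argument (the union of $B$ and the generic part of $BkB$ is already cogeneric, so its complement is non-generic and one must show a non-generic union of $B$-double cosets in a connected group with this much symmetry must be empty) or an appeal to the fact that a connected group of finite Morley rank with a definable split $BN$-pair of Tits rank $1$ is algebraic, which is where the real theorem lies. Everything before that is dimension bookkeeping on top of the key lemma and the soluble-analysis facts already quoted.
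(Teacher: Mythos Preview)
Your overall strategy---squeeze out the dimensions, obtain a Bruhat decomposition, and cite a rank-$1$ $BN$-pair identification---is exactly the paper's. But two of your intermediate claims are wrong or unjustified, and you miss one ingredient that makes the endgame painless.

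First, the sentence ``the involution $i \in C$ inverts $B'$ \dots\ and also inverts $C$'' is false: $C$ is abelian, so $i \in C$ \emph{centralises} $C$. Your ``positive/negative parts'' heuristic therefore does not give $\dim B' = \dim C$. What you actually extract from~\ref{i:generick} together with $T_k = C$ and $C_G^\circ(i) = C$ is only the inequality $\dim C \geq \dim B - \dim C$, i.e.\ $\dim B' \leq \dim C$. The reverse inequality $\dim B' \geq \dim C$ needs a separate argument: the paper observes that $C$ acts \emph{freely} on $B'$ (if $c \in C\setminus\{1\}$ centralised some $d \in B'\setminus\{1\}$, then $d \in C_B^\circ(c) = C$ by Fact~\ref{f:soluble}, contradiction), so $\dim B' \geq \dim C$.

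Second, you never prove $N_G(B) = B$, and without it the TI-property of $B'$ from~\ref{i:preBruhat} does not give the global estimate $\dim G \geq \dim B + \dim B'$ that you implicitly use when writing ``$\dim G = 2\dim B - \dim C$''. The paper handles this by a Frattini argument: $N_G(B) = B \cdot (N_G(B)\cap N_G(C))$, and any $x \in N_G(B) \cap N_G(C)\setminus C$ is, by the dihedral structure, an involution inverting $C$; one checks it also inverts $B'$ (dark matter), so it would force $[C,B'] = 1$, absurd.

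Once you have both $\dim B' \geq \dim C$ and $N_G(B) = B$, the paper's squeeze
\[
\dim G \;\geq\; \dim B + \dim B' \;\geq\; \dim B + \dim C \;=\; \dim B' + 2\dim C \;\geq\; \dim G
\]
(the last inequality coming from the fibre count on the map $k \mapsto T_k$ into $\{C^b : b \in B'\}$) forces equality everywhere. Then $G = B \sqcup BgB'$ for any $g \notin B$ is immediate on dimension grounds, and the $BN$-pair identification applies directly---so the ``main obstacle'' you anticipate (ruling out extra double cosets by a covering argument) simply evaporates.
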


\begin{proof}\leavevmode
\begin{enumerate}[resume*=proofs]
\item
We first observe $\dim B' \geq \dim C$.
% %Old
% Let $A \leq B'$ be $C$-minimal. Let $x \in C_C(A) \setminus\{1\}$; then $A \leq N_G^\circ(C_G^\circ(x)) = N_G^\circ(C) = C$, against darkness of $A \leq B'$ from~\ref{i:B'dark}.
% So the action of $C$ on $A$ is faithful. We linearise and find $\dim B' \geq \dim A \geq \dim C$.
%%end{Old}
Let $c \in C \setminus\{1\}$ centralise some $d \in B'\setminus\{1\}$. By Fact~\ref{f:soluble}, $d$ normalises $C_B^\circ(c) = C$ and therefore $d \in C$: a contradiction. So the action of $C$ on $B'$ is free, implying $\dim B' \geq \dim C$.
(Using $\dim C = \dim T_k \geq \dim B'$ and linearisation methods we even have equality and can push to $B \simeq \GA_1(\bK)$ for some field $\bK$, but this is not used.)

% The usual map $k \mapsto T_k$ gives the usual estimate $\dim G \leq \dim B + \dim C$.
Thanks to~\ref{i:Tk=C}, to generic $k \in i^G$ associate the conjugate of $C$ equal to $T_k$. The range is contained in $\{C^b: b \in B\} = \{C^b: b \in B'\}$ with same dimension as $B'$. The fibre is easily seen to have dimension exactly that of $C$. Thus:
\[\dim G - 2 \dim C = \dim i^G - \dim C \leq \dim B'.\]

We next show $N_G(B) = B$. By a Frattini argument and Fact~\ref{f:conjugacy}, $N_G(B) = B \cdot (N_G(B) \cap N_G(C))$. Suppose there is $x \in N_G(B) \cap N_G(C) \setminus C$. Since the configuration is dihedral by~\ref{i:dihedral}, Fact~\ref{f:DW} implies that $x$ is an involution inverting $C$. It also normalises $B'$, which consists of dark matter by~\ref{i:B'dark}: so $C_{B'}(x)^\circ = 1$, and $x$ inverts $B'$. So $x$ inverts both $C$ (which is $2$-divisible) and $B'$; therefore $C$ centralises $B'$, a contradiction. So $N_G(B) = B$.

Since for $g \notin B$ one has $B'\cap B^g = 1$ by~\ref{i:preBruhat}, we deduce:
\[\dim G \geq \dim B + \dim B' \geq \dim B + \dim C = \dim B' + 2 \dim C \geq \dim G.\]
This is enough to get the Bruhat decomposition $G = BgB' \sqcup B$ for $g \notin B$ and retrieve the $(B, N)$-pair, like in \cite{ZQuasi} or \cite[Proposition~3]{DJInvolutive}, or any identification theorem for $\PGL_2(\bK)$ known so far in the theory of groups of finite Morley rank.
\qedhere
\end{enumerate}
\end{proof}

\subsection{Proof of Theorem~\ref{t:1or2}}\label{S:1or2}

% Theorem~\ref{t:1or2} immediately implies its Corollary, as the dihedral case is well-described in \cite[Proposition~1]{DWGeometry} (Fact~\ref{f:DW}). So we focus on the former.
The proof adapts \cite{BCJMinimal}. It is independent from \S~\ref{S:CoBo} but still uses the key lemma of \S~\ref{S:lemma}.

\begin{notation*}\leavevmode
\begin{itemize}
\item
Let $(C < G)$ be a definable, connected, quasi-Frobenius pair of finite Morley rank. Suppose that $G$ is $U_2^\perp$ with involutions.
\item
Let $N = N_G(C)$. If $[N:C]$ is even then we are done by Fact~\ref{f:DW}. So suppose that $[N:C]$ is odd. By Fact~\ref{f:DW}, $(N < G)$ is strongly embedded.
\item
Let $I$ be the set of involutions in $G$ and $\check{I} = I \setminus N$. Notice that since $[N:C]$ is odd, $\check{I} = I \setminus C$.
By strong embedding, $I$ is a single $G$-conjugacy class, and $I\cap N = I \cap C$ is a single $N$-conjugacy class \cite[Theorem~10.19]{BNGroups}.
\end{itemize}
\end{notation*}

%Recall that $\Gamma = \bigcup_G C^g$ is generic in $G$. Its elements are called \emph{bright}. Let $\Delta = G\setminus \Gamma$, whose elements we call \emph{dark}. By the genericity of (algebraic) Cartan subgroups, semi-simple torsion elements are bright.

\begin{proposition}[a first generic set]\leavevmode
\begin{enumerate}[label={\itshape (\roman*)},series=claims]
\item\label{i:checkI}
$\check{I}$ is generic in $I$;
\item\label{i:mu}
the product map $\mu\colon C \times \check{I} \to G$ has trivial fibres;
\item\label{i:CIgeneric}
$C\cdot \check{I}$ is generic in $G$;
\item\label{i:CCi}
for any $i \in I \cap C$ one has $C = C_G^\circ(i)$;
\item\label{i:stronglyrealinC}
non-trivial strongly real elements of $C$ are involutions;
\item
it is enough to prove $N = C$.
\end{enumerate}
\end{proposition}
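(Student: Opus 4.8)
The plan is to read off all three conclusions of Theorem~\ref{t:1or2} from the single equality $N=C$; producing that equality is the job of the subsequent propositions and is where the genuine difficulty lies, so at this stage there is little to do beyond bookkeeping plus one short conjugacy argument.

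So, assume $N=C$. Then $[N_G(C):C]=[N:C]=1$, which in particular lies in $\{1,2\}$ as demanded. Next fix an involution $i\in I\cap C$; this set is non-empty because $[N:C]$ is odd and the strongly embedded $N$ carries involutions. By~\ref{i:CCi} we have $C=C_G^\circ(i)$; since $C_G(i)$ normalises its own connected component, $C_G(i)\le N_G\!\left(C_G^\circ(i)\right)=N_G(C)=N=C$, so in fact $C_G(i)=C$. Hence $C=C_G^\circ(i)$ is exactly the centraliser required by the statement, and moreover $i\in Z(C)$.

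It then remains only to pin down the Pr\"ufer $2$-rank, and this is the one clause needing an argument; it flows from the centrality of $i$ just obtained. By strong embedding, $I\cap N=I\cap C$ is a single $N$-conjugacy class \cite[Theorem~10.19]{BNGroups}, so under $N=C$ every involution of $C$ is $C$-conjugate to $i$; but $i$ is central in $C$, hence is its own unique $C$-conjugate, so $C$ contains a unique involution. A Sylow $2$-subgroup of $G$ can be conjugated into $N$, and thence into $C$ since $[N:C]$ is odd, so it too has a unique involution; as $C_{2^\infty}^{d}$ has $2^{d}-1$ involutions, this forces the Pr\"ufer $2$-rank to be $1$. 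That completes the reduction. The only genuine obstacle is not located in this step at all but in the equality $N=C$ itself, to be established afterwards; and one should keep in mind that the whole discussion here is confined to the odd, strongly embedded case, the dihedral case of Theorem~\ref{t:1or2} being already subsumed by Fact~\ref{f:DW}.
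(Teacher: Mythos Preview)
Your proposal addresses only item~(vi); you invoke \ref{i:CCi} as if already established and say nothing about \ref{i:checkI}--\ref{i:stronglyrealinC}. But the proposition has six parts and the paper supplies a proof for each. Those earlier items are not mere bookkeeping: \ref{i:checkI} uses that $I$ has Morley degree~$1$ together with strong embedding to rule out $I\cap N$ being generic; \ref{i:mu} uses the \textsc{ti} property of $C$ to see that an equation $c_1k_1=c_2k_2$ forces $k_1\in N$; \ref{i:CIgeneric} and \ref{i:CCi} come out of a \emph{single} dimension count
\[
\dim(C\cdot\check I)=\dim C+\dim I\ge \dim C_G^\circ(i)+\dim G-\dim C_G^\circ(i)=\dim G,
\]
which simultaneously yields genericity and the equality $\dim C=\dim C_G^\circ(i)$; and \ref{i:stronglyrealinC} observes that an involution inverting a non-trivial element of $C$ must normalise $C$ and hence, since $[N:C]$ is odd, lie in $C$. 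In particular the very item you rely on, \ref{i:CCi}, genuinely depends on \ref{i:checkI}, \ref{i:mu} and \ref{i:CIgeneric}, so your proof as written is circular unless those are supplied.

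For item~(vi) itself your argument is correct and is essentially the paper's: both combine \ref{i:CCi} with the fact (from strong embedding) that $N=C$ acts transitively on $I\cap C$ to conclude that $C$ has a unique involution, whence the Pr\"ufer $2$-rank is~$1$. You spell out the intermediate steps ($C_G(i)\le N_G(C)=C$, hence $i\in Z(C)$; a Sylow $2$-subgroup sits inside $C$) more explicitly than the paper's one-line version, but the substance is identical.
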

\begin{proof}\leavevmode
\begin{enumerate}[label={\itshape (\roman*)},series=proofs]
\item
Otherwise $I \cap N$ is generic in $I$; by strong embedding the latter is a single $G$-conjugacy class, so it has degree $1$.
Then for $g \in G \setminus N$, one has $(I \cap N) \cap (I\cap N)^g \neq \emptyset$, against strong embedding of $(N < G)$.
%For arbitrary $g \in G$, one has $I \cap N^g = (I \cap N)^g$ which is generic in $I$ as well, forcing $I \cap N \cap N^g \neq \emptyset$, against strong embedding of $(N < G)$.
\item
%Consider the product map $\mu \colon C \cdot \check{I} \to G$ taking $(c, k)$ to $ck$. We claim that it has trivial fibres. Indeed, i
If $c_1 k_1 = c_2 k_2$ in obvious notation with $(c_1, k_1) \neq (c_2, k_2)$, then $1 \neq c_1^{-1} c_2 = k_1 k_2$ is an element of $C$ inverted by $k_1$. Since $C$ is \textsc{ti}, it follows $k_1 \in N$, against $k_1 \in \check{I}$.
\item
Let $i \in C$ be an involution; since $C$ is \textsc{ti}, one has $C_G^\circ(i) \leq C$.
The map $\mu$ has trivial fibres by~\ref{i:mu}, so using~\ref{i:checkI}:
\[\dim (C\cdot \check{I}) = \dim (C \times \check{I}) = \dim C + \dim I \geq \dim C_G^\circ(i) + \dim G - \dim C_G^\circ(i) = \dim G,\]
proving genericity.
\item
In the proof of~\ref{i:CIgeneric} we also have equality in $\dim C_G^\circ(i) \leq \dim C$, so by connectedness of the latter, $C = C_G^\circ(i)$.
\item
Let $1 \neq k \ell \in C$ be a product of two involutions. Then $k$ inverts $k\ell$ so it normalises $C$; since $[N:C]$ is odd, $k \in C$ and $\ell \in C$ likewise. Now by~\ref{i:CCi} both are central in $C$, so $k\ell$ is an involution too.
\item
Suppose this holds. By strong embedding and~\ref{i:CCi}, $N = C = C_G^\circ(i)$ conjugates its involutions, so $i$ is the only involution in $C$; hence the Pr\"ufer $2$-rank of $G$, which equals that of $C$, is exactly $1$. (One could also rely on \cite[Lemma~11.20]{BNGroups}.)
\qedhere
\end{enumerate}
\end{proof}

We henceforth assume $C < N$ and derive a contradiction.

\begin{notation*}\leavevmode
\begin{itemize}
\item
Let $\sigma \in N \setminus C$.
%By strong embedding, $\sigma$ is no involution.
\item
Also let $I_\sigma = (C \sigma I) \cap \check{I}$.
\end{itemize}
\end{notation*}

\begin{proposition}[a generically dark coset]\leavevmode
\begin{enumerate}[resume*=claims]
\item\label{i:Isigma}
$I_\sigma$ is generic in $I$;
\item\label{i:sigma}
we may suppose that $\sigma$ is strongly real and dark; \emph{we do so from now on};
\item\label{i:CCsigma}
$C^\circ_C(\sigma) = 1$;
\item\label{i:sigmaCgenericCsigma}
$\sigma^C$ is generic in $\sigma C$.
\end{enumerate}
\end{proposition}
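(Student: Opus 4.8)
The plan is to obtain \ref{i:CCsigma} and \ref{i:sigmaCgenericCsigma} for free from the Key Lemma, and to concentrate the real work on \ref{i:Isigma}, from which \ref{i:sigma} will follow by a bright/dark split. Granting \ref{i:sigma}, so that $\sigma$ is dark and strongly real, the Key Lemma says that $C_G^\circ(\sigma)$ consists of dark elements; since $C_C^\circ(\sigma)$ is a connected subgroup of $C$, all of whose non-trivial elements are bright, this forces $C_C^\circ(\sigma) = 1$, which is \ref{i:CCsigma}. For \ref{i:sigmaCgenericCsigma}, the set $\sigma C$ is a coset of the connected group $C$, hence definable of Morley degree $1$ and dimension $\dim C$; as $\sigma$ normalises $C$, the group $C$ acts on $\sigma C$ by conjugation, and the orbit $\sigma^C \subseteq \sigma C$ has dimension $\dim C - \dim C_C^\circ(\sigma) = \dim C$ by \ref{i:CCsigma}, hence is generic in $\sigma C$. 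The last clause of the proposition is nothing new. Before attacking \ref{i:Isigma} I would record a finiteness fact used throughout: by \ref{i:stronglyrealinC} the product of two involutions of $C$ lies in $\{1\}\cup(I\cap C)$, so $\{1\}\cup(I\cap C)$ is an elementary abelian subgroup, hence finite as $G$ is $U_2^\perp$; also $C\sigma\subseteq N\setminus C$ contains no involution since $I\cap N = I\cap C$.

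For \ref{i:Isigma} I would reformulate: for $k\in\check I$ one has $k\in I_\sigma$ iff $k$ inverts some element of $C\sigma$ (use $I=I^{-1}$, and note $kx\in I$ iff $k$ inverts $x$, with $kx\ne 1$ since $C\sigma$ carries no involution). Introduce the incidence set
\[\mathcal Z = \{(k,x)\in I\times C\sigma : kx\in I\},\]
which via $(k,x)\mapsto(k,kx)$ is in definable bijection with $\{(k,\ell)\in I\times I : k\ell\in C\sigma\}$, and in fact with $\{(k,\ell)\in\check I\times\check I : k\ell\in C\sigma\}$ (if a component lay in $I\cap C$, the other would have to lie in $C\sigma$, impossible). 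Then run the two projections $\pi_1,\pi_2$ of $\mathcal Z$. The image of $\pi_1$ meets $\check I$ exactly in $I_\sigma$, and its fibre over $k$ is the set of involutions of the left coset $(k\sigma)C$, which is finite: if $(k\sigma)c,(k\sigma)c'$ are involutions then their product is $c^{-1}c'\in C$, strongly real, hence in $\{1\}\cup(I\cap C)$. The image of $\pi_2$ is the set of strongly real elements of $C\sigma$, with fibre over $x$ the set $J_x$ of involutions inverting $x$. To feed genericity in, use the translate trick: $C\cdot\check I$ is generic in $G$ with trivial fibres (\ref{i:CIgeneric}, \ref{i:mu}), hence $\sigma\cdot(C\check I) = C\sigma\check I$ is generic in $G$, and $(c,k)\mapsto c\sigma k$ parametrises it with finite fibres (a collision forces $kk'\in C\cap(\text{strongly real}) = \{1\}\cup(I\cap C)$), so $\dim(C\sigma\check I) = \dim C + \dim I = \dim G$. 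Matching the two computations of $\dim\mathcal Z$ against this should pin $\dim\mathcal Z = \dim I$; since $\deg I = 1$, the image of $\pi_1$ is then generic in $I$, and $I_\sigma$, being its intersection with the generic set $\check I$ inside the degree-$1$ set $I$, is generic.

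Granting \ref{i:Isigma}, I would prove \ref{i:sigma} by splitting $\mathcal Z = \mathcal Z_\Gamma\sqcup\mathcal Z_\Delta$ according to whether the $C\sigma$-coordinate is bright or dark. If $x\in C\sigma$ is bright, say $x\in C^g$, then $g\notin N$ (else $x\in C$), and any involution inverting $x$ normalises $C^g$ by \textsc{ti}-ness (as $x\ne 1$), hence lies in $I\cap N^g = (I\cap C)^g$, which is finite. So $\mathcal Z_\Gamma$ has finite fibres over its image under $\pi_2$, whence $\dim\mathcal Z_\Gamma \le \dim(C\sigma) = \dim C$; as $\dim I > \dim C$, the set $\pi_1(\mathcal Z_\Gamma)$ is non-generic in $I$, while $I_\sigma\subseteq\pi_1(\mathcal Z_\Gamma)\cup\pi_1(\mathcal Z_\Delta)$ is generic. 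Hence $\mathcal Z_\Delta\ne\emptyset$: some involution inverts a dark element $x_0$ of $C\sigma\subseteq N\setminus C$, and replacing $\sigma$ by $x_0$ establishes \ref{i:sigma} (one re-runs \ref{i:Isigma} for the new $\sigma$, its proof having used only $\sigma\in N\setminus C$).

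The hard part is the dimension bookkeeping in \ref{i:Isigma}: establishing $\dim\mathcal Z = \dim I$ is really the statement that a certain genericity transfers between $G$ and the codimension-$\dim C$ subvariety $I$, and one may not simply pass from ``generic in $G$'' to ``generic in $I$''. The argument must instead squeeze $\dim\mathcal Z$ exactly, leaning on the explicit finite-fibre parametrisations and on the finiteness of $I\cap C$. A related, smaller point is the inequality $\dim I > \dim C$ (equivalently $\dim G > 2\dim C$) invoked in \ref{i:sigma}: if it is not at hand as a black box, the case $\dim G\le 2\dim C$ must be disposed of separately, though it ought to be contradictory in a strongly embedded quasi-Frobenius configuration. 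The remaining ingredients --- \ref{i:CCsigma}, \ref{i:sigmaCgenericCsigma}, the finiteness of $I\cap C$, the \textsc{ti} argument for bright elements --- are routine.
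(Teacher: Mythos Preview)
Your treatment of \ref{i:CCsigma} and \ref{i:sigmaCgenericCsigma} is correct and matches the paper. The problems are in \ref{i:Isigma} and \ref{i:sigma}, where you work much harder than necessary and do not close the arguments.

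For \ref{i:Isigma}, your incidence set $\mathcal Z$ is a detour, and you never establish the lower bound $\dim\mathcal Z \geq \dim I$; the phrase ``should pin'' is not a proof, and nothing you have written connects the genericity of $C\sigma\check I$ in $G$ to a lower bound on $\dim\mathcal Z$. The paper bypasses all of this with a one-line set identity you almost wrote down: since $C\check I$ is generic in $G$, so is its translate $\sigma C I = C\sigma I$; by connectedness the intersection $(C\sigma I)\cap(C\check I)$ is generic, and this intersection is \emph{exactly} $C I_\sigma$ (both sets are left-$C$-invariant, and an element $c_2 k_2 = c_1\sigma k_1$ of the intersection has $k_2 = c_2^{-1}c_1\sigma k_1 \in I_\sigma$). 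Then trivial fibres of $\mu$ restricted to $C\times I_\sigma$ give $\dim G = \dim C + \dim I_\sigma$, hence $\dim I_\sigma = \dim I$.

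For \ref{i:sigma}, your bright/dark split of $\mathcal Z$ forces you to invoke $\dim I > \dim C$, which you correctly flag as unavailable; in a configuration you are trying to prove impossible, you cannot assume it. But the split is unnecessary. You already observed that $C\sigma \subseteq N\setminus C$ contains no involution. Combine this with~\ref{i:stronglyrealinC}: if the new $\sigma$ (strongly real, in $C\sigma$) were bright, it would lie in some conjugate of $C$, and~\ref{i:stronglyrealinC} applied to that conjugate forces $\sigma$ to be an involution---impossible. So \emph{every} strongly real element of $C\sigma$ is dark, with no dimension counting at all.
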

\begin{proof}\leavevmode
\begin{enumerate}[resume*=proofs]
\item
By \ref{i:CIgeneric}, $C\cdot \check{I} \subseteq C\cdot I$ is generic in $G$; translating, so is $\sigma C I = C\sigma I$. By connectedness, $(C\sigma I) \cap (C\check{I}) = C I_\sigma$ is generic in $G$ as well. But $I_\sigma \subseteq \check{I}$ so by \ref{i:mu}, one finds:
\[\dim C + \dim I = \dim G = \dim(C \cdot I_\sigma) = \dim C + \dim I_\sigma,\]
which implies $\dim I_\sigma = \dim I$.
\item
It follows from~\ref{i:Isigma} that $I_\sigma \neq \emptyset$. So there is an equation $c \sigma k = \ell$ in obvious notation. Now $c\sigma$ is strongly real; up to considering this element instead of our original $\sigma$, we shall suppose that $\sigma$ is strongly real.
%Hence $\sigma$ is strongly real.
If $\sigma$ is bright, then it lies in a conjugate of $C$.
By~\ref{i:stronglyrealinC}, $\sigma$ is an involution of $N$. But $\sigma \in N$, so $\sigma \in C$: a contradiction.
\item
By~\ref{i:sigma} we may apply the key lemma: $C_G^\circ(\sigma)$ consists of dark matter. In particular, $C_C^\circ(\sigma) = 1$.
\item
Computing modulo $C$ one has the inclusion $\sigma^C \subseteq \sigma C$.
By~\ref{i:CCsigma} one has $C_C^\circ(\sigma) = 1$, so $\dim(\sigma^C) = \dim C$. Hence $\sigma^C$ is a generic subset of $\sigma C$.
\qedhere
\end{enumerate}
\end{proof}

\begin{notation*}
Let $\check{Z} = C_G(\sigma) \setminus N$.
\end{notation*}

\begin{proposition}[a second generic set and contradiction]\leavevmode
\begin{enumerate}[resume*=claims]
\item\label{i:ZgenCsigma}
$\check{Z}$ is generic in $C_G(\sigma)$;
\item\label{i:CZC:1}
$\dim (C \check{Z} C) = 2 \dim C + \dim C_G(\sigma)$;
\item\label{i:fwelldefined}
Every $k \in I_\sigma$ inverts a unique element of $\sigma C$;
\item\label{i:CZC:2}
$\dim I \leq \dim C + \dim C_G(\sigma)$;
\item\label{i:CZCgeneric}
$C \check{Z} C$ is generic in $G$;
\item
contradiction.
\end{enumerate}
\end{proposition}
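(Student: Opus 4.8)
The plan is to establish the six claims in turn: the first five are dimension bookkeeping pinning down $\dim G = 2\dim C + \dim C_G(\sigma)$, and the last confronts \ref{i:CZCgeneric} with genericity of $C\cdot\check I$ (claim \ref{i:CIgeneric}). For \ref{i:ZgenCsigma}: since $N^\circ = C$ one has $C_G(\sigma)\cap N = C_N(\sigma)$ with $C_N^\circ(\sigma)\leq C_C^\circ(\sigma) = 1$ by \ref{i:CCsigma}, so $C_G(\sigma)\cap N$ is finite; and $C_G(\sigma)$ is infinite, for otherwise $\sigma^G$ would be generic in $G$ while $\sigma^G\subseteq\Delta$ (darkness is conjugacy-invariant), against non-genericity of dark matter. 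Hence $\check Z = C_G(\sigma)\setminus N$ is generic in $C_G(\sigma)$.

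For \ref{i:CZC:1}: the product map $C\times\check Z\times C\to G$ has domain of dimension $2\dim C+\dim\check Z = 2\dim C+\dim C_G(\sigma)$ by \ref{i:ZgenCsigma}, and I would show its fibres are finite. From $c_1 z c_2 = c_1' z' c_2'$, put $a = (c_1')^{-1}c_1$ and $b = c_2(c_2')^{-1}$ in $C$, so $z' = azb$; conjugating by $\sigma$ (it centralises $z,z'$) gives $azb = a^\sigma z b^\sigma$, whence $(a^\sigma)^{-1}a = z(b^\sigma b^{-1})z^{-1}$ lies in $C\cap zCz^{-1}$, hence is trivial since $z\notin N$ and $C$ is \textsc{ti}; so $a,b\in C_C(\sigma)$, a finite set. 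Thus $\dim(C\check Z C) = 2\dim C+\dim C_G(\sigma)$. For \ref{i:fwelldefined}: if $k\in I_\sigma = (C\sigma I)\cap\check I$ then $k = c\sigma\ell$ with $c\in C$, $\ell\in I$, so $c\sigma = k\ell\in\sigma C$ is a product of two involutions and hence inverted by $k$ — existence; for uniqueness, if $k$ inverts both $\sigma c_1$ and $\sigma c_2 = (\sigma c_1)d$ ($d\in C$), a short computation puts $d^k$ in $C\cap C^k$, so $d^k = 1$ since $k\notin N$, forcing $d = 1$.

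For \ref{i:CZC:2}: on $\Omega = \{(x,k) : x\in\sigma C,\ k\in\check I,\ x^k = x^{-1}\}$ the projection to $\check I$ has, by \ref{i:fwelldefined}, singleton fibres over the generic set $I_\sigma$ (claim \ref{i:Isigma}), so $\dim\Omega\geq\dim I$; the projection to $\sigma C$ has each non-empty fibre inside one coset of $C_G(x)$, and for $x$ in the generic part of the image $x\in\sigma^C$ (genericity of $\sigma^C$ in $\sigma C$, claim \ref{i:sigmaCgenericCsigma}), so there $\dim C_G(x) = \dim C_G(\sigma)$, while off that locus $x$ is dark by the Key Lemma and contributes no more. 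This yields $\dim I\leq\dim C+\dim C_G(\sigma)$. For \ref{i:CZCgeneric}: by \ref{i:CCi} one has $\dim I = \dim G-\dim C$, so \ref{i:CZC:2} reads $\dim G\leq 2\dim C+\dim C_G(\sigma)$, whereas $C\check Z C\subseteq G$ and \ref{i:CZC:1} give the reverse inequality; hence $C\check Z C$ is generic in $G$.

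For the contradiction: $C\check Z C$ and $C\cdot\check I$ are both generic in the connected group $G$, so they meet, say $c_1 z c_2 = c_3 k$ with $z\in\check Z$, $k\in\check I$. Writing $z = akb$ ($a,b\in C$) and squaring, $k^2 = 1$ becomes $zwz = w^{-1}$ with $w = b^{-1}a^{-1}\in C$, so $\iota := zw$ is an involution; $\iota\notin C$ (else $z = \iota w^{-1}\in N$), so $\iota\in\check I$ and $z = \iota w^{-1}$. As $z$ centralises $\sigma$ we get $\sigma^\iota = \sigma^w\in\sigma C$; re-applying $\iota$ forces $\iota$ to invert the $C$-component $d$ of $\sigma^\iota$, and $d\in C\cap C^\iota$, so $d = 1$ (else $\iota\in N$), i.e. $\iota$ centralises $\sigma$. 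But $\iota$ is $G$-conjugate to $i$, so $C_G^\circ(\iota)$ is a conjugate $C^g$ of $C$ with $\iota$ its unique involution and $C^g\neq C$; moreover $C_G(\iota)\leq N_G(C^g)$, so the dark element $\sigma\in C_G(\iota)$ normalises $C^g$, and from this one extracts the contradiction using \textsc{ti}-ness and the maximality/\textsc{ti}-ness of $C_G^\circ(\sigma)$ supplied by the Key Lemma. I expect this last move — excluding an outside involution that centralises $\sigma$ — to be the main obstacle; everything else is routine fibration bookkeeping once the Key Lemma is available.
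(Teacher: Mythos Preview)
Your arguments for \ref{i:ZgenCsigma}--\ref{i:CZCgeneric} are essentially the paper's, with one imprecision in \ref{i:CZC:2}: you only control fibres over the ``generic part'' $\sigma^C$ of the image and assert that the rest ``contributes no more''. That is not justified as written. The paper instead shows that the \emph{entire} image lies in $\sigma^C$: any $x$ in the image is strongly real and, lying in $\sigma C \subseteq N\setminus C$, is not an involution; hence $x$ is dark by \ref{i:stronglyrealinC}, so $C_C^\circ(x)=1$ by the Key Lemma, $x^C$ is generic in $\sigma C$, and therefore $x^C$ meets $\sigma^C$, giving $x\in\sigma^C$. With this, every fibre has dimension at most $\dim C_G(\sigma)$ and your bound follows cleanly.

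The genuine gap is in the final contradiction. You correctly reach the decisive intermediate conclusion: an involution $\iota\in\check I$ centralises $\sigma$. (Your route via $\iota=zw$ is a pleasant variant of the paper's reduction to $ck=z$ followed by $[c,\sigma]=k\cdot k^\sigma\in C$ being inverted by $k$.) But your last sentence is hand-waving. From $\sigma\in C_G(\iota)\le N_G(C^g)$ alone, neither \textsc{ti}-ness of $C$ nor the Key Lemma's control of $C_G^\circ(\sigma)$ suffices: you can show that $\iota$ inverts $A=C_G^\circ(\sigma)$ (since $C_A^\circ(\iota)\le A\cap C^g=1$), but you do \emph{not} know $\sigma\in A$, so this does not contradict $\iota$ centralising $\sigma$. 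The paper closes the argument with Steinberg's torsion theorem \cite{DSteinberg}: for an involution $\iota$ in a connected group, $C_G(\iota)/C_G^\circ(\iota)$ is a $2$-group; strong embedding gives $C_G(\iota)\le N^g$, so this quotient also embeds in $N^g/C^g$, which has odd order. Hence $C_G(\iota)=C_G^\circ(\iota)=C^g$, so $\sigma\in C^g$ is bright, contradicting \ref{i:sigma}. This torsion-theoretic step is the missing idea in your outline.
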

\begin{proof}\leavevmode
\begin{enumerate}[resume*=proofs]
\item
One has $\dim (C_G(\sigma) \cap N) = \dim C_N^\circ(\sigma) = \dim C_C^\circ(\sigma) = 0$ by~\ref{i:CCsigma}, so $\check{Z}$ is generic in $C_G(\sigma)$.
\item
We shall prove that the product map $C \times \check{Z} \times C \to G$ has finite fibres. Consider equations of the form $c_1 z c_2 = c'_1 z' c'_2$, in obvious notation. Since $C$ is a subgroup, we may assume $c'_1 = c'_2 = 1$, so this reduces to treating equations of the form $c_1 z c_2 = z'$.
Applying $\sigma$ yields:
\[c_1^\sigma z c_2^\sigma = z'^\sigma = z' = c_1 z c_2,\]
whence $z^{-1} [c_1, \sigma] z = [c_2^{-1}, \sigma] \in C^z \cap C$. By definition $z \notin N$, so $[c_1, \sigma] = 1$. But this happens only finitely often by~\ref{i:CCsigma}. Likewise for $c_2$.
So fibres are finite and the desired equality follows from~\ref{i:ZgenCsigma}.
\item
By definition, every $k \in I_\sigma$ can be written $k = c \sigma \ell$ with $c \in C$ and $\ell \in I$. Then $k$ inverts $c\sigma \in C\sigma = \sigma C$.
Now suppose that $\sigma c_1, \sigma c_2 \in \sigma C$ are \emph{two} elements inverted by $k$. Let $d = c_1^{-1} c_2$. Then 
\[d^k = (c_1^{-k} \sigma^{-k}) (\sigma^k c_2^k) = (\sigma c_1)^{-k} (\sigma c_2)^k = \sigma c_1\cdot c_2^{-1} \sigma^{-1} \in C^k \cap C^{\sigma^{-1}} = C^k \cap C.\]
Since $k \in I_\sigma \subseteq \check{I}$ does not normalise $C$, we find $d = 1$, as desired.
%
%We contend that there is a unique element in $\sigma C$ inverted by $k$. For if $k$ inverts both $\sigma c_1$ and $\sigma c_2$, then letting $d = c' c^{-1} \in C$ one finds:
%\[d^k = (c' c^{-1})^k = (c'\sigma)^k (\sigma^{-1} c^{-1})^k = \sigma^{-1} c'^{-1} c \sigma \in C \cap C^k.\]
%Now $k \notin N$, so $d = 1$, as claimed.
\item
By~\ref{i:fwelldefined}, the following map is well-defined and definable: $f\colon I_\sigma \to \sigma C$ taking $k$ to the only element of $\sigma C$ inverted by $k$. We bound the dimension of its fibres.

First, $f(I_\sigma) \subseteq \sigma^C$. Indeed, let $\tau \in f(I_\sigma)$. Then $\tau$ is strongly real; not being an involution, it is dark by~\ref{i:stronglyrealinC}. Now by \ref{i:sigmaCgenericCsigma}, $\tau^C$ is generic in $\sigma C$, hence it intersects $\sigma^C$: and $\tau$ is a $C$-conjugate of $\sigma$.

Therefore if $f(\ell) = f(k) = \tau$, then $k\ell \in C_G(\tau)$ and $\ell \in k C_G(\tau)$. But $\tau$ and $\sigma$ are conjugate, so:
\[\dim f^{-1}(k) \leq \dim \left (k C_G(\tau)\right) = \dim C_G(\tau) = \dim C_G(\sigma).\]

%By~\ref{i:ZgenCsigma}, the rank of every fibre is therefore $\leq \dim \check{Z}$.
Finally by~\ref{i:sigmaCgenericCsigma} and~\ref{i:Isigma}:
\[\dim C = \dim \left(\sigma C\right) = \dim \sigma^C \geq \dim f(I_\sigma) \geq \dim I_\sigma - \dim C_G(\sigma) = \dim I - \dim C_G(\sigma).\]
%Consider the set of pairs $(k, \tau) \in \check{I} \times \sigma C$ such that $k$ inverts $\tau$.
\item
Recall from \ref{i:CCi} that $C = C_G^\circ(i)$, so $\dim G = \dim I + \dim C$.
Using \ref{i:ZgenCsigma}, \ref{i:CZC:1} and \ref{i:CZC:2}, one gets:
\[\dim (C\check{Z}C) = 2 \dim C + \dim C_G(\sigma) \geq \dim C + \dim I = \dim G,\]
as desired.
\item
By~\ref{i:CIgeneric} and~\ref{i:CZCgeneric}, both sets $C \check{I}$ and $C \check{Z} C$ are generic in $G$, which is connected. So said sets are not disjoint, yielding an equation of the form $c k = c_1 z c_2$, in obvious notation. Conjugating then left-translating by elements of $C$, we reduce to $ck = z$ with $c \in C$, $k \in \check{I}$, and $z \in \check{Z}$. Therefore $c^\sigma k^\sigma = ck$, so $[c, \sigma] \in C$ is trivial or strongly real inverted by $k$. Since $k \in \check{I}$ does not normalise $C$, we find that $\sigma$ centralises $c$; it thus centralises $k = c^{-1} z$ as well. So $\sigma \in C_G(k) = C_G^\circ(k)$ by Steinberg's torsion theorem \cite{DSteinberg} and strong embedding.
Now $\sigma$ is bright, violating~\ref{i:sigma}.
\qedhere
\end{enumerate}
\end{proof}

\subsection{Proof of Theorem~\ref{t:G3C}}

Here again the argument is independent from \S~\ref{S:CoBo} and \S~\ref{S:1or2}. We reproduce the proof of \cite[Corollaire~4.1.31]{DGroupes} or \cite[Corollaire~3.27]{DGroupes2}.

\begin{notation*}\leavevmode
\begin{itemize}
\item
Let $(C < G)$ be a definable, connected, quasi-Frobenius pair of finite Morley rank. Suppose $G$ is $U_2^\perp$ with involutions, and the configuration is dihedral.
\item
Let $I$ be the set of involutions, which is a single conjugacy class; the structure of the Sylow $2$-subgroup is known by \cite[Proposition~1]{DWGeometry}.
\end{itemize}
\end{notation*}

\begin{proposition}\leavevmode
\begin{enumerate}[label=(\roman*)]
\item
Let $(i, j)$ be a generic pair of involutions. Then there is a unique involution commuting to both.
\item
$\dim G = 3 \dim C$.
\end{enumerate}
\end{proposition}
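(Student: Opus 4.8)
The plan is to exploit the dihedral structure from Fact~\ref{f:DW}: there is an involution $i$ with $C = C_G^\circ(i)$ (this comes from Fact~\ref{f:DW} / Theorem~\ref{t:1or2}), the Pr\"ufer $2$-rank is $1$, and $N_G(C)\setminus C$ consists of involutions inverting $C$. The key computational identity I want is $\dim G = \dim I + \dim C$, which follows from $C = C_G^\circ(i)$ and $\deg i^G = 1$ (since $I$ is a single conjugacy class). Given that, proving $\dim G = 3\dim C$ amounts to proving $\dim I = 2\dim C$.

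For part (i), take a generic pair $(i,j)$ of involutions. Their product $d = ij$ is generically a dark, strongly real element (it is inverted by $i$; if it were bright, $i$ and $j$ would both lie in the same conjugate of $C$, forcing $d\in C$, and by dihedrality and uniqueness of the involution in $C$ one gets a contradiction as in the Key Lemma). So the Key Lemma applies to $d$: $A = C_G^\circ(d)$ is dark, abelian, inverted by $i$ (and by $j$). Now in the ambient group $G$, which is $U_2^\perp$, the involutions of $G$ centralising $d$ — equivalently normalising $A$ and inverting it — should be controlled: an involution $k$ inverting the abelian $2$-divisible-quotient... more precisely, I would argue that $C_G(d)$ has Pr\"ufer $2$-rank $1$ (it sits inside a conjugate-like structure) so $C_G^\circ(d) = A$ has a unique involution, OR rather that the set of involutions commuting with both $i$ and $j$ is $\{$the unique involution of $C_G^\circ(d)\}$, using that any such involution lies in $C_G(d)$, and $C_G(i,j)\le C_i\cap C_j$ is finite while the connected centraliser story pins it down. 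The cleanest route: any involution $\ell$ with $\ell\in C_G(i)\cap C_G(j)$ lies in $C_i\cap C_j$ which is a finite TI-intersection; since the $2$-elements of $C_i = C^g$ form a group of Pr\"ufer rank $1$ with a unique involution, $\ell$ is forced to be that unique involution of $C_i$, and symmetrically of $C_j$ — and one checks these coincide, giving uniqueness. Existence comes from taking the (unique) involution in $C_G^\circ(d)$, which commutes with $i$ and $j$ since they invert the abelian $A$ and $A$ is $2$-divisible on its connected part... here one must be slightly careful, so I would instead produce the commuting involution directly inside $C_{C_G(i)}(j)$ via a torsion/Sylow argument in the $U_2^\perp$ setting.

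For part (ii), I would run a double-counting / fibration argument. Consider the map sending a generic pair $(i,j)\in I\times I$ to the unique involution $\kappa(i,j)$ commuting with both, well-defined generically by (i). Fix a generic involution $k$; the fibre $\kappa^{-1}(k)$ consists of pairs $(i,j)$ of involutions commuting with $k$, i.e.\ $i,j\in C_G(k)$, so $\dim\kappa^{-1}(k) \le 2\dim C_G(k) = 2\dim C$ (using $C_G^\circ(k)$ is a conjugate of $C$ and $C_G(k)/C_G^\circ(k)$ is finite, indeed trivial or the dihedral extension). On the other hand the image is generic in $I$ (it contains a generic set of involutions — one needs that $\kappa$ is generically surjective onto $I$, which follows by a dimension count once the fibre bound is in place, or directly: every generic $k$ arises). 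Hence $\dim(I\times I) = 2\dim I \le \dim I + 2\dim C$, giving $\dim I \le 2\dim C$. For the reverse inequality, I would instead count the domain against the fibres of a different natural map, e.g.\ $(i,d)\mapsto$ data reconstructing $j$, using that from $i$ and $d=ij$ one recovers $j=id$, so pairs $(i,j)$ biject with pairs $(i,d)$ with $d$ strongly real inverted by $i$; generically $d$ ranges over a conjugate-translate of a dark $A$ and one gets $2\dim I = \dim\{(i,j)\} \ge \dim I + \dim A$ with $\dim A = \dim C$ (since $A = C_G^\circ(a)$ and by the Key Lemma plus dihedral TI-structure $A$ is a maximal nilpotent TI-subgroup of the same dimension as $C$). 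Combining, $\dim I = 2\dim C$, hence $\dim G = \dim I + \dim C = 3\dim C$.

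The main obstacle is the uniqueness-and-existence statement in part (i), specifically controlling the \emph{disconnected} part of $C_G(d)$ and of $C_G(i)$ and showing the commuting involution is unique and genuinely exists for a generic pair — this is where the $U_2^\perp$ hypothesis, the Pr\"ufer rank $1$ conclusion, and the TI-ness of conjugates of $C$ all have to be combined carefully; the remark in the excerpt (``There is no dimension estimate in the strongly embedded case'') signals that the argument genuinely uses dihedrality here, presumably through the fact that $N_G(C)\setminus C$ is a full set of inverting involutions, giving extra involutions to play with. The dimension bookkeeping in (ii) is then routine modulo getting $\dim A = \dim C$, which itself is essentially the content of the first remark after the Key Lemma.
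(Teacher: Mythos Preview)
Your plan has genuine gaps in both parts.

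\textbf{Part (i).} The Key Lemma route to existence fails outright: if $d = ij$ is dark then $A = C_G^\circ(d)$ is \emph{dark} by the Key Lemma, hence contains no involutions at all (involutions are toral, hence bright). So there is no ``unique involution of $C_G^\circ(d)$'' to exhibit. Your fallback (``a torsion/Sylow argument in $C_{C_G(i)}(j)$'') is left unspecified. For uniqueness, your claim that an involution $\ell \in C_G(i)\cap C_G(j)$ must lie in $C_i \cap C_j$ is wrong: in the dihedral case $C_G(i) = N_G(C_i) \supsetneq C_i$, and the whole non-identity coset $N_G(C_i)\setminus C_i$ consists of involutions. So $\ell$ could perfectly well lie in $(N_G(C_i)\setminus C_i)\cap(N_G(C_j)\setminus C_j)$, and the \textsc{ti} property of $C$ gives you nothing.

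The paper handles (i) quite differently. Existence is simply quoted from \cite[Proposition~1~(ix)]{DWGeometry}. Uniqueness is a short four-group argument using Pr\"ufer $2$-rank $1$: if $k \neq \ell$ both commute with $i$ and $j$, then either $k\ell$ is an involution, whence $\{1,i,k,\ell\}$ is a four-group forcing $i = k\ell = j$; or $k\ell$ is not an involution, whence $k\ell \in C_G^\circ(i)\cap C_G^\circ(j)$, again forcing $i = j$. The Key Lemma is not used at all.

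\textbf{Part (ii).} Your fibre bound gives only $\dim I \le 2\dim C$, and then you appeal to $\dim A = \dim C$ for the reverse inequality. This equality is nowhere proved; the remarks after the Key Lemma say $A$ is \textsc{ti} and maximal nilpotent, not that it has the same dimension as $C$. The paper avoids this entirely by observing that the fibre of $(i',j') \mapsto k$ has dimension \emph{exactly} $2\dim C$: the fibre over $k$ sits inside $C_G(k)^2$ (upper bound), and conversely any pair of involutions from $C_G(k)\setminus C_G^\circ(k)$ is mapped to $k$ (lower bound, since $k$ commutes with both and uniqueness from (i) applies). That coset has dimension $\dim C$, so one gets $2\dim I = \dim I + 2\dim C$ in a single stroke, and no separate lower-bound argument is needed.
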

\begin{proof}\leavevmode
\begin{enumerate}[label=(\roman*)]
\item
Existence is given by \cite[Proposition~1~(ix)]{DWGeometry}. We prove uniqueness. Suppose $i, j$ are independent involutions and $k \neq \ell$ are two involutions commuting with both $i$ and $j$.

If $k\ell$ is an involution, then $k$ and $\ell$ commute: so $\{1, i, k, \ell\}$ is a four-group, forcing $i = k\ell = j$ likewise, a contradiction. So $k\ell$ is not an involution, and $k\ell \in C_G(i)$ implies $k\ell \in C_G^\circ(i)$. Since $k\ell \in C_G^\circ(j)$ likewise, we find $i = j$, a contradiction again.
\item
The above defines a map $f\colon \bJ \to I$, where $\bJ \subseteq I \times I$ is a generic subset. Now $f$ is clearly surjective by conjugacy. Moreover if $f(i', j') = k$, then $i', j' \in C_G(k)$; conversely, any pair from $C_G(k)\setminus C_G^\circ(k)$ is mapped to $k$. So fibres have dimension exactly $2 \dim C$. Altogether,
\[\dim I = \dim \bJ - 2 \dim C = 2 \dim I - 2 \dim C,\]
so that $\dim G - \dim C = \dim I = 2 \dim C$ and $\dim G = 3 \dim C$.
\qedhere
\end{enumerate}
\end{proof}

\printbibliography

@article{BCJMinimal,
	author = {Burdges, Jeffrey and Cherlin, Gregory and Jaligot, Éric},
	doi = {10.1016/j.jalgebra.2006.12.031},
	fjournal = {Journal of Algebra},
	issn = {0021-8693},
	journal = {J. Algebra},
	mrclass = {20E32},
	mrnumber = {MR2344580 (2008i:20040)},
	number = {2},
	pages = {581––612},
	title = {Minimal connected simple groups of finite {M}orley rank with strongly embedded subgroups},
	volume = {314},
	year = {2007}
}

@article{DJInvolutive,
	author = {Deloro, Adrien and Jaligot, Éric},
	doi = {10.2140/pjm.2016.285.111},
	fjournal = {Pacific Journal of Mathematics},
	issn = {0030-8730},
	journal = {Pacific J. Math.},
	mrclass = {03C60 (20E25 20E36 20F11)},
	mrnumber = {3554245},
	number = {1},
	pages = {111––184},
	title = {Involutive Automorphisms of ${N}_\circ^\circ$-groups of finite {M}orley rank},
	volume = {285},
	year = {2016}
}

@article {DWGeometry,
    AUTHOR = {Deloro, Adrien and Wiscons, Joshua},
     TITLE = {The geometry of involutions in ranked groups with a {TI}-subgroup},
   JOURNAL = {Bull. Lond. Math. Soc.},
  FJOURNAL = {Bulletin of the London Mathematical Society},
    VOLUME = {52},
      YEAR = {2020},
    NUMBER = {3},
     PAGES = {411--428},
      ISSN = {0024-6093},
   MRCLASS = {20F11 (03C60 51A25)},
  MRNUMBER = {4171376},
MRREVIEWER = {Zaniar Ghadernezhad},
       DOI = {10.1112/blms.12334},
       URL = {https://doi-org.accesdistant.sorbonne-universite.fr/10.1112/blms.12334},
}

@unpublished{ZQuasi,
AUTHOR = {Zamour, Samuel},
TITLE = {Quasi-groupes de {F}robenius de rang de {M}orley fini},
YEAR = {2022},
NOTE = {To appear},
}

@article{DSteinberg,
	author = {Deloro, Adrien},
	doi = {10.1515/JGT.2009.005},
	fjournal = {Journal of Group Theory},
	issn = {1433-5883},
	journal = {J. Group Theory},
	mrclass = {20F11},
	mrnumber = {2554762},
	number = {5},
	pages = {709––710},
	title = {Steinberg's torsion theorem in the context of groups of finite {M}orley rank},
	volume = {12},
	year = {2009}
}

@book{BNGroups,
	author = {Borovik, Alexandre and Nesin, Ali},
	isbn = {0-19-853445-0},
	pagetotal = {xviii+409},
	publisher = {The Clarendon Press {-–} Oxford University Press},
    address = {New York},
	series = {Oxford Logic Guides},
	title = {Groups of finite {M}orley rank},
	volume = {26},
	x-color = {#0033ff},
	year = {1994}
}

@unpublished{DOominimal,
TITLE = {The $o$-minimal ${A}_1$-conjecture},
AUTHOR = {Deloro, Adrien and Onshuus, Alf},
NOTE = {In preparation},
YEAR = {2022},
}

@article {CJTame,
	AUTHOR = {Cherlin, Gregory and Jaligot, {\'E}ric},
	TITLE = {Tame minimal simple groups of finite {M}orley rank},
	JOURNAL = {J. Algebra},
	FJOURNAL = {Journal of Algebra},
	VOLUME = {276},
	YEAR = {2004},
	NUMBER = {1},
	PAGES = {13--79},
	ISSN = {0021-8693},
	CODEN = {JALGA4},
	MRCLASS = {20E32 (03C45 03C60 20A15 20D05)},
	MRNUMBER = {MR2054386 (2005b:20056)},
	MRREVIEWER = {O. V. Belegradek},
	KEYWORDS = {LORO},
	}

@phdthesis {DGroupes,
	AUTHOR = {Deloro, Adrien},
	TITLE = {Groupes simples connexes minimaux de type impair},
	SCHOOL = {Universit\'{e} Paris 7},
	ADDRESS = {Paris},
	YEAR = {2007},
	KEYWORDS = {LORO},
}

@article {DGroupes1,
	AUTHOR = {Deloro, Adrien},
	TITLE = {Groupes simples connexes minimaux alg\'ebriques de type impair},
	JOURNAL = {J. Algebra},
	FJOURNAL = {Journal of Algebra},
	VOLUME = {317},
	YEAR = {2007},
	NUMBER = {2},
	PAGES = {877--923},
	ISSN = {0021-8693},
	CODEN = {JALGA4},
	MRCLASS = {20E32 (03C60 20E28)},
	MRNUMBER = {MR2362946 (2008j:20084)},
	MRREVIEWER = {{\'E}ric Jaligot},
		KEYWORDS = {LORO},
		}

@article {DGroupes2,
	AUTHOR = {Deloro, Adrien},
	TITLE = {Groupes simples connexes minimaux non-alg\'ebriques de type impair},
	JOURNAL = {J. Algebra},
	FJOURNAL = {Journal of Algebra},
	VOLUME = {319},
	YEAR = {2008},
	NUMBER = {4},
	PAGES = {1636--1684},
	ISSN = {0021-8693},
	CODEN = {JALGA4},
	MRCLASS = {20E32 (03C60)},
	MRNUMBER = {MR2383061 (2009a:20048)},
	MRREVIEWER = {{\'E}ric Jaligot},
	KEYWORDS = {LORO},
	}

@article{BCSemisimple,
	author = {Burdges, Jeffrey and Cherlin, Gregory},
	doi = {10.1142/S0219061309000860},
	fjournal = {The Journal of Mathematical Logic},
	issn = {0219-0613},
	journal = {J. Math Logic},
	mrclass = {20F11 (03C45)},
	mrnumber = {2679439},
	number = {2},
	pages = {183––200},
	title = {Semisimple torsion in groups of finite {M}orley rank},
	volume = {9},
	year = {2009}
}

@article{CGood,
	author = {Cherlin, Gregory},
	doi = {10.1515/jgth.2005.8.5.613},
	fjournal = {Journal of Group Theory},
	issn = {1433-5883},
	journal = {J. Group Theory},
	mrclass = {03C45 (20A15)},
	mrnumber = {MR2165294 (2006e:03045)},
	number = {5},
	pages = {613––621},
	title = {Good tori in groups of finite Morley rank},
	volume = {8},
	year = {2005}
}

\end{document}